\documentclass[a4paper,11pt]{amsart}

\newcommand{\SFCC}{\sigma\mbox{-}\mathrm{FIN}\mbox{-}\mathrm{CC}}

\newcommand{\mf}[1]{\mathfrak{#1}}

\newcommand{\dom}[1]{\mathrm{dom}(#1)}

\let\oldtocsection=\tocsection
\let\oldtocsubsection=\tocsubsection
\let\oldtocsubsubsection=\tocsubsubsection
 
\renewcommand{\tocsection}[2]{\hspace{0em}\oldtocsection{#1}{#2}\bfseries}
\renewcommand{\tocsubsection}[2]{\hspace{1.8em}\oldtocsubsection{#1}{#2}}
\renewcommand{\tocsubsubsection}[2]{\hspace{4.4em}\oldtocsubsubsection{#1}{#2}}
\makeatletter
\renewcommand\subsection{\@startsection{subsection}{2}%
  \z@{-.5\linespacing\@plus-.7\linespacing}{.5\linespacing}%
  {\normalfont\scshape}}
\renewcommand\subsubsection{\@startsection{subsubsection}{3}%
  \z@{.5\linespacing\@plus.7\linespacing}{.5\linespacing}%
  {\normalfont\scshape}}

\makeatother
\usepackage[framemethod=tikz]{mdframed}

\usepackage[pdftex]{hyperref}
\hypersetup{
    colorlinks=true, %set true if you want colored links
    linktoc=all,     %set to all if you want both sections and subsections linked
    linkcolor=blue,  %choose some color if you want links to stand out
    allcolors=blue,
}

\usepackage[utf8]{inputenc}
\usepackage[T1]{fontenc}
\usepackage{tikz-cd}
\usepackage{paralist}
\usepackage{mathrsfs}
\usepackage{amssymb}
\usepackage{amsmath}
\usepackage{amsfonts}
\usepackage{enumitem}

\newtheorem{theorem}{Theorem}[section]
\newtheorem{corollary}[theorem]{Corollary}
\newtheorem{lemma}[theorem]{Lemma}

\newtheorem{Fact}[theorem]{Fact}

\theoremstyle{definition}
\newtheorem{definition}[theorem]{Definition}

\theoremstyle{remark}
\newtheorem{remark}[theorem]{Remark}

\theoremstyle{question}
\newtheorem{question}[theorem]{Question}

\usepackage[pdftex]{hyperref}
\hypersetup{
    colorlinks=true, %set true if you want colored links
    linktoc=all,     %set to all if you want both sections and subsections linked
    linkcolor=blue,  %choose some color if you want links to stand out
    allcolors=blue,
}

\newcommand{\brm}{\begin{remark}\begin{rm}}
\newcommand{\erm}{\end{rm}\end{remark}}

\newcommand{\bce}{\begin{compactenum}}
\newcommand{\ece}{\end{compactenum}}

\newcommand{\bb}[1]{\mathbb{#1}}
\newcommand{\cl}[1]{\mathcal{#1}}

\newcommand{\set}[2]{\{#1 \: ; \: #2\}}
\newcommand{\seq}[2]{\la#1 \: ; \: #2\ra}

\newcommand{\N}{\bb{N}}
\newcommand{\M}{\bb{M}}

\newcommand{\B}{\cl{B}}

\newcommand{\ZFC}{{\sf ZFC}}
\newcommand{\ZF}{{\sf ZF}}
\newcommand{\CH}{{\sf CH}}

\newcommand{\PFA}{{\sf PFA}}
\newcommand{\RC}{{\sf RC}}

\newcommand{\BA}{{\sf BA}(\omega_1)}

\newcommand{\restr}[2]{\ensuremath{#1 \! \upharpoonright \! #2}}

\newcommand{\SH}{{\sf SH}(\omega_1)}

\newcommand{\WP}{{\sf WC}(\omega_1)}

\newcommand{\MA}{{\sf MA}_{\omega_1}}

\newcommand{\dense}{\omega_1\mbox{-dense}}

\renewcommand{\P}{\bb{P}}

\newcommand{\sub}{\subseteq}

\newcommand{\la}{\langle}
\newcommand{\ra}{\rangle}

\newcommand{\R}{\mathbb{R}}
\newcommand{\Q}{\mathbb{Q}}
\newcommand{\Z}{\mathbb{Z}}
\newcommand{\Add}{\mathrm{Add}}

\newcommand{\BM}{(\mathcal{B}, \bar{\mu})}
\newcommand{\BMD}{\mathcal{B}}
\newcommand{\Bk}{\mathcal{B}_\kappa}

\newcommand{\x}{\times}
\newcommand{\T}{\mathbb{T}}

\newcommand{\covN}{\mathrm{cov}(\mathcal{N})}
\newcommand{\nonN}{\mathrm{non}(\mathcal{N})}

\begin{document}

\title[Rado's Conjecture and the random algebra]{Rado's Conjecture and the random algebra}

\author{Radek Honzik}
\address{Charles University, Department of Logic,
Celetn{\' a} 20, Prague~1, 
116 42, Czech Republic
}
\email{radek.honzik@ff.cuni.cz}
\urladdr{logika.ff.cuni.cz/radek}

%\author{SS}

%\thanks{
%The author was supported by GA{\v C}R grant ``The role of set theory in modern mathematics'' (24-12141S)}

\begin{abstract} 
Rado's Conjecture, $\RC$, is a compactness principle for a certain class of partial orders, namely trees $T$ of height $\omega_1$ without cofinal branches, postulating that a partial order $P$ from this class can be decomposed into at most countably many antichains if and only if all its suborders of size $\omega_1$ can be decomposed into at most countably many antichains. Rado's Conjecture is thus an uncountable version of Mirsky's theorem asserting that for every natural number $n$, every infinite partial order $P$ can be decomposed into at most $n$ many antichains if and only if all its finite  suborders can be decomposed into at most $n$ many antichains. Todorcevi{\'c} showed in \cite{T:Rado} that $\RC$ is consistent modulo a strongly compact cardinal. $\RC$ implies $2^\omega \le \omega_2$, and has powerful consequences such as the Singular Cardinal Hypothesis, the failure of $\square(\kappa)$ for every regular $\kappa \ge \omega_2$ (and hence in particular the Projective Determinacy), and the Strong Chang Conjecture. It is also known that it is incompatible with Martin Axiom, $\MA$. We show that $\RC$ is consistent with $2^\omega = \omega_2$ and the cardinal invariants in Cicho{\' n} diagram corresponding to forcing with the random algebra, i.e., $\mf{d} = \omega_1$, $\covN = \omega_2$, $\nonN = \omega_1$. This provides a new pattern of cardinal invariants known to be consistent with $\RC$. To prove the theorem, we first observe that random algebras do not specialize non-special trees of height $\omega_1$. Then we use the random algebra $\Bk$ for a strongly compact $\kappa$ to define a new version of Mitchell forcing which yields the required result.
\end{abstract}

\maketitle

\tableofcontents

\section{Introduction}
Before reviewing Rado's Conjecture, let us state a few facts about trees of height $\omega_1$ which have turned out to be the right class of partial orders to consider in this context. Let $(T,<)$ be a tree of height $\omega_1$ of an arbitrary size. We will be interested in trees $T$ which do not have cofinal branches (if $T$ is an $\omega_1$-tree, then such trees are called Aronszajn). A strengthening of the property of not having cofinal branches is the property of being \emph{special}:

\begin{definition}\label{def:special}
If $(T,<)$ is a tree of height $\omega_1$, then $T$ is called \emph{special} if either of the following equivalent conditions holds:
\bce[(i)]
\item
There is a function $f: T \to \omega$ which is 1-1 on chains, i.e., if for all $t,s \in T$, if $t <_T s$, then $f(t) \neq f(s)$.
\item
$T$ can be decomposed into at most countably many antichains.
\item
There is a homomorphism $f: (T,<) \to (\Q,<)$, where $(\Q,<)$ is the standard linear order on the rationals (we say that $T$ is $\Q$-embeddable).
\ece
\end{definition}

The less obvious implications (i)$\to$(ii) or (i)$\to$(iii) are due to Kurepa (see \cite[p.~284]{TODtophandbook} for a proof). It is of some interest to note that the equivalences are true for any partial order $(P,<)$, not necessarily a tree.

\begin{definition}\label{def:RC}
\emph{Rado's Conjecture}, $\RC$, denotes the statement that for every tree $T$ of height $\omega_1$ the following two conditions are equivalent:
\bce[(i)]
\item $T$ is special.
\item Every subtree $T$ of size $\omega_1$ is special.\footnote{The subtrees can be without loss of generality required to be closed downwards in the tree order $(T,<)$ because $T$ has height $\omega_1$.}
\ece
\end{definition}

Using the equivalences in Definition \ref{def:special}, $\RC$ is equivalent to a principle which asserts that for every tree  $(T,<)$  of height $\omega_1$, $T$ can be decomposed into at most countably many antichains if and only if every subtree of $T$ of size $\omega_1$ can be decomposed into at most countably many antichains. $\RC$ is thus an uncountable version, restricted to tree orders, of Mirsky's theorem asserting that for every partial order $(P,<)$ and any natural number $n \in \omega$, $P$ can be decomposed into at most $n$ many antichains if and only if every finite suborder can be decomposed into at most $n$ many antichains.

There are also dual versions of these principles which require decompositions into chains instead of antichains. The dual version of Mirsky's theorem for partial orders is Dilworth's theorem; interestingly, while Mirsky's theorem is provable  in set theory without the Axiom of Choice ($\ZF$), Dilworth's theorem requires the compactness principle for first-order logic and is thus really a ``compactness-type'' theorem. The dual version of Rado's Conjecture is Galvin's Conjecture. Unlike Rado's Conjecture, Galvin's Conjecture is formulated for all partial orders (not just trees) and it is open whether it is consistent (from any large cardinal). It is also easy to observe that Galvin's Conjecture implies Rado's Conjecture. 
 
Both $\RC$ and Galvin's Conjecture were originally formulated as compactness principles for certain classes of graphs, related to countable chromatic numbers. See \cite{Tdich}  and \cite{RH:comp} for more historical context and details on these conjectures.

Todorcevi{\'c} showed in \cite{T:Rado} that $\RC$ is consistent (with $\CH$) from a strongly compact cardinal using a Levy collapse, and Zhang \cite[Section 2.1]{Zhang} later showed that a standard Mitchell forcing yields the consistency of $\RC$ with $2^\omega =\omega_2$ (also from a strongly compact cardinal). Rado's Conjecture has powerful implications: $\RC$ for instance implies 
$2^\omega \le \omega_2$, and more generally $\theta^\omega = \theta$ for all regular $\theta \ge \omega_2$; The Singular Cardinal Hypothesis; For any regular cardinal $\theta \ge \omega_2$, the stationary set $\theta \cap \mathrm{cof}(\omega)$ reflects; $\square(\kappa)$ fails for all regular $\kappa \ge \omega_2$; The Strong Chang's Conjecture (see \cite{Tdich} for more details). Morever, $\RC + 2^\omega = \omega_2$ implies the strong tree property (see \cite{TPW:cc, TPW:more}), thus unifying certain conceptually different compactness principles.

It is known that $\RC$ is  incompatible with Martin Axiom, $\MA$: There are always non-special trees $T$ of height $\omega_1$ and size $2^\omega$ without cofinal branches,\footnote{For instance the tree denoted $\sigma(\R)$ composed of countable bounded subsets of the reals well-ordered by the natural linear order on the reals, ordered by the end-extension. $\sigma(\R)$ is a non-special tree of height $\omega_1$ of size $2^\omega$ without cofinal branches. Note that while $\sigma(\R)$ is not $\Q$-embeddable, it is trivially $\R$-embeddable (by the identity function). See \cite[Example 7]{T:Rado} and \cite[Observation 3.2]{Zhang}.} and $\RC$ thus implies that there must be non-special subtrees of $T$ of size and height $\omega_1$. This contradicts a consequence of $\MA$ that all trees of height and size $\omega_1$ without cofinal branches are special ($\mathsf{SAT}$).\footnote{$\RC$ can be seen as a maximalist form of $\mathsf{SAT}$ postulated for trees of all sizes: a tree of height $\omega_1$ of any size is  special exactly when all its subtrees of size $\omega_1$ are special.}

Since $\RC$ is incompatible with $\MA$, it can be viewed as a powerful alternative to forcing axioms. To evaluate its relative power to decide various mathematical statements, it would be beneficial to have a rich variety of models of $\RC$. However, at the moment, only two models are known in the literature: the Levy collapse from \cite{T:Rado} yielding $\CH$, and the Mitchell collapse from \cite{Zhang} yielding $2^\omega = \omega_2$.\footnote{Zhang \cite{Zhang} introduced a Baire version of $\RC$, $\RC^B$, which deals with trees which are $\sigma$-distributive as forcing notions (he also showed that $\RC^B$ is strictly weaker than $\RC$). He further observed that a relatively large class of forcing notions can be used to force $\RC^B$.} We will focus on $\RC + 2^\omega = \omega_2$ in this article because we wish to discuss the compatibility of $\RC$ with certain (non-trivial) patterns of the cardinal invariants of the Baire space $\omega^\omega$, and thus require the $\neg \CH$ context.

There are several reasons why forcing $\RC + 2^\omega = \omega_2$ is complicated: (i) $\RC$ is destroyed by adding a new real, and thus forcing $\RC$ together with a desired statement must be done in one step if new reals are added, and  (ii) a key step of arguments for obtaining $\RC$ is showing that a given forcing notion $\P$ does not specialize trees of height $\omega_1$ without cofinal branches. In contrast to the property of not adding new cofinal branches (a property used to show various compactness principles related to trees), the property of not specializing trees is harder to ensure.

Todorcevi{\'c} \cite{T:Rado} observed that $\sigma$-closed forcings do not specialize trees, and Zhang \cite{Zhang} observed that Cohen forcing adding any number of subsets of $\omega$ does not specialize trees either.
In this article, we observe that the random algebra $\B_\kappa$, for any infinite $\kappa$, is another example of a forcing notion which does not specialize non-special trees of height $\omega_1$; more generally, we show that all $\sigma$-finite-cc forcings do not specialize trees of height $\omega_1$ (Theorem \ref{th:random1}). We use this observation to define a ``randomized'' Mitchell forcing $\M^R_\kappa$ to force $\RC$ with different properties than the standard Mitchell forcing. By way of example we show in Theorem \ref{th:random2} that $\RC$ is consistent with $2^\omega = \omega_2$ and cardinal invariants of Cicho{\' n} diagram corresponding to random forcing.

Regarding other consequences of $\RC$, in \cite{RH:comp} we checked that $\RC$ does not decide some of the well-known mathematical problems the way $\PFA$ does, by observing that in standard Mitchell models these problems have the same truth value as in $V = L$ (we specifically discussed Whitehead's Conjecture, Suslin Hypothesis, and Baumgartner's Axiom). In Lemma \ref{th:random3} we observe that the same holds for the ``randomized'' Mitchell forcing as regards the Suslin Hypothesis and Baumgartner's axiom. The status of Whitehead's Conjecture is left open (see Question \ref{q}). It is completely open whether $\RC + 2^\omega = \omega_2$ is consistent with the way $\PFA$ decides these statements (see Question \ref{q1}). 

\medskip

\textbf{Acknowledgement.} The author wishes to thank (alphabetically) to {\v S}{\'a}rka Stejskalov{\'a} and Corey Switzer for helpful discussions of the concepts related to random algebras.

\section{Random algebras}

Let us briefly summarize basic facts regarding the random algebra $\B_\kappa$, where $\kappa$ is an infinite cardinal.

\subsection{Maharam theorem}

\begin{definition}\label{def:ma}
A \emph{probability measure algebra} is a pair $\BM$ such that $\BMD$ is a $\sigma$-complete Boolean algebra and $\bar{\mu}: \BMD \to [0,1]$ is a function which satisfies:
\bce[(i)]
\item $\bar{\mu}(0) = 0$, $\bar{\mu}(a) > 0$ for all $a \neq 0$.
\item Whenever $\seq{a_k}{k \in \N}$ is a disjoint sequence in $\BMD$, then $\bar{\mu}(\bigvee \set{a_k}{k\in \N}) = \sum_{k \in \N}\bar{\mu}(a_k)$.
\item (Probability algebra) $\bar{\mu}(1) = 1$.
\ece
\end{definition}

Note that all probability measure algebras are ccc and complete (by ccc and $\sigma$-completeness).

\begin{definition}\label{def:homo}
Let $\BM$ be a probability measure algebra.
\bce[(i)]
\item We say that $\B$ is has \emph{Maharam type} $\kappa$ if $\kappa$ is the smallest cardinal such that there is a subset $D$ of $\B$ of size $\kappa$ which \emph{completely generates} $\BMD$ (i.e.\ $\B$ is the smallest algebra containing $D$ which is closed under suprema and infima of arbitrary sets). We denote the Maharam type of $\B$ by $\tau(\B)$.

\item We say that $\B$ is \emph{Maharam-homogeneous} if  every  non-trivial principal ideal of $\BMD$ has the same Maharam type as the whole algebra, i.e.\ for every $y \neq 0$ in $\BMD$,  $\tau(\BMD) = \tau(\mathcal{B}_y)$, where $\mathcal{B}_y := \set{x \in \mathcal{B}}{x \le y}$.
\ece
\end{definition}

In the context of $\sigma$-finite measure algebras (so in particular for the probability algebras), the following Fact provides another useful characterization of Maharam-homogeneity.

\begin{Fact}
\label{f:homo}
A probability algebra $\BM$ is Maharam-homogeneous if and only if $\BMD$ is isomorphic, as a Boolean algebra, to every non-trivial principal ideal of $\BMD$ (i.e.\ to ideals $\mathcal{B}_y = \set{x \in \mathcal{B}}{x \le y}$ for some $y \in \mathcal{B}$).
\end{Fact}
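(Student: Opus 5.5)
One direction is immediate. Suppose $\mathcal{B}$ is isomorphic to every non-trivial principal ideal $\mathcal{B}_y$. A Boolean isomorphism preserves arbitrary suprema and infima. Hence it carries any set that completely generates $\mathcal{B}$ onto a set that completely generates $\mathcal{B}_y$, and conversely. So $\tau(\mathcal{B}_y)=\tau(\mathcal{B})$ for every $y\neq 0$, which is Maharam-homogeneity. Here I note that suprema in $\mathcal{B}_y$ computed inside $\mathcal{B}_y$ agree with those in $\mathcal{B}$, since $\mathcal{B}_y$ is downward closed.

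For the converse I would argue through Maharam's classification. Let $\kappa=\tau(\mathcal{B})$, and fix $y\neq 0$.

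\textbf{Step 1.} Normalize the restricted measure. Define $\bar\mu_y(x)=\bar\mu(x)/\bar\mu(y)$ for $x\le y$. This makes $(\mathcal{B}_y,\bar\mu_y)$ a probability algebra: $\sigma$-completeness, strict positivity and countable additivity are all inherited from $\mathcal{B}$.

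\textbf{Step 2.} Check homogeneity of the ideal. For any $0\neq z\le y$, the ideal $(\mathcal{B}_y)_z$ equals $\mathcal{B}_z$. By hypothesis $\tau(\mathcal{B}_z)=\kappa=\tau(\mathcal{B}_y)$, so $\mathcal{B}_y$ is itself Maharam-homogeneous of type $\kappa$.

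\textbf{Step 3.} Invoke Maharam's theorem. It says that every Maharam-homogeneous probability algebra of type $\kappa$ is isomorphic, even as a measure algebra, to the measure algebra of the product measure on $\{0,1\}^\kappa$. For $\kappa=0$ this is the trivial two-element algebra; for $\kappa\ge\omega$ it is $\mathcal{B}_\kappa$. Applying this to both $\mathcal{B}$ and $\mathcal{B}_y$ gives $\mathcal{B}\cong\mathcal{B}_y$.

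The case $\kappa$ finite but nonzero cannot occur. Infinitely many generators are needed unless the algebra is atomic. An atomic homogeneous algebra must have type $0$, since the ideal below an atom is $\{0,a\}$, which has type $0$. In that case $\mathcal{B}=\{0,1\}$, and the only non-trivial principal ideal is $\mathcal{B}$ itself, so the statement holds trivially.

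The main obstacle is Step 3. It is the content of Maharam's theorem, which I would cite rather than reprove. If a self-contained argument were wanted, the route would be an exhaustion/back-and-forth. One builds, by transfinite recursion of length $\kappa$, a family of independent elements of measure $1/2$ in each algebra. Homogeneity is used at every stage to ensure that the subalgebra generated so far is not yet everything on any nonzero piece, so a new independent element can always be found. One then extends the measure-preserving map on generators to the completions. Since the isomorphism in the Fact is only required as Boolean algebras, the measure-preserving version is more than enough.
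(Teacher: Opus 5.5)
Your argument is correct and is essentially the proof behind the reference the paper gives (Fremlin 331N). Maharam type is a Boolean invariant, which gives one direction; for the other, $\mathcal{B}$ and the renormalized ideal $\mathcal{B}_y$ are homogeneous probability algebras of the same type $\kappa=\tau(\mathcal{B})$, so Maharam's theorem, in the exact-type form you correctly use, identifies both with the measure algebra of $\{0,1\}^\kappa$.

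Only your optional self-contained sketch is incomplete. There the independent family must be built interleaved with a complete generating set of size $\kappa$, so that it completely generates the algebra; merely producing $\kappa$ independent elements of measure $\frac12$ does not make the extended map onto. This does not affect your main argument, which cites the theorem.
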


\begin{proof}
See \cite[331N]{fremlin2004measure} for a proof.
\end{proof}

Maharam theorem implies that all infinite Maharam-homogeneous probability algebras are isomorphic as measure algebras to the algebra obtained from the product measure space $(2^\kappa, \Sigma_\kappa, \lambda_\kappa)$, for some infinite $\kappa$ (see Jech \cite[Example 15.31]{JECHbook} or \cite[254J]{fremlin2004measure2} for the details on this measure). Let us state the definition of the product measure space for convenience: 

\begin{definition}
Suppose $\kappa$ is an infinite cardinal and let $T$ be the set of all finite functions from $\kappa$ to $2$. Let $\Sigma_\kappa$ be the least $\sigma$-algebra generated by the system $\set{S_t}{t \in T}$, where $S_t = \set{f \in 2^\kappa}{t \sub f}$ are called the \emph{cylinder sets}. We sometimes call the collection $\Sigma_\kappa$ the \emph{Baire subsets} of $2^\kappa$. The product measure $\lambda_\kappa$ is the unique $\sigma$-additive measure on $\Sigma_\kappa$ such that $\lambda_\kappa(S_t) = \frac{1}{2^{|t|}}$.
\end{definition}

Let $\mathscr{N}_\kappa$ denote $\sigma$-ideal of sets in  $\Sigma_\kappa$ of measure zero in $\lambda_\kappa$.

\begin{definition}\label{def:m}
For an infinite cardinal $\kappa$, let $\Bk$ denote the quotient $$\Sigma_\kappa/\mathscr{N}_\kappa,$$ with naturally defined Boolean operations. It is a standard fact that $\Bk$ is a complete ccc Boolean algebra. The measure $\lambda_\kappa$ can be naturally extended to $\Bk$ by setting $$\bar{\lambda}_\kappa([x]_{\mathscr{N}_\kappa}) = \lambda_\kappa(x),$$ for $x \in \Sigma_\kappa$. The pair $(\Bk,\bar{\lambda}_\kappa)$ is a Maharam-homogenous probability measure algebra. We will often write just $\Bk$ to denote this probability algebra. We often refer to $\Bk$ as the \emph{random algebra} (of type $\kappa$).
\end{definition}

\begin{theorem}[Maharam \cite{maharam1942homogeneous}]\label{th:Mah}
Suppose $\BM$ is an infinite probability measure algebra which is Maharam-homogeneous (Definition \ref{def:homo}). Then $\BM$ is isomorphic as a measure algebra to the probability algebra $(\Bk, \bar{\lambda}_\kappa)$ for some infinite $\kappa$.
\end{theorem}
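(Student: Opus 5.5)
We would prove Theorem \ref{th:Mah} by the classical back-and-forth/transfinite construction, with $\kappa = \tau(\B)$. First we note $\kappa$ is infinite. If $\tau(\B)$ were finite, $\B$ would be finite and hence atomic. Infinitude and homogeneity rule this out, since a principal ideal below an atom has type $0$. The same observation shows $\B$ is atomless.

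The key intermediate step is a \emph{splitting lemma}: in an atomless probability algebra, every $a$ and every $r \in [0,\bar\mu(a)]$ admit some $b \le a$ with $\bar\mu(b) = r$. We prove it by exhausting $a$ with elements of measure $\le r$. Countable additivity and ccc let us take countable suprema. Atomlessness gives arbitrarily small nonzero pieces, so the exhaustion cannot stop below $r$.

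Next we build, by recursion on $\xi < \kappa$, a family $\set{e_\xi}{\xi<\kappa}$ of \emph{stochastically independent} elements of measure $1/2$ that completely generates $\B$. Fix a generating set $\set{d_\xi}{\xi<\kappa}$ and let $\B_\xi$ be the complete subalgebra generated by $\set{e_\eta}{\eta<\xi}\cup\set{d_\eta}{\eta<\xi}$. We want $e_\xi$ independent over $\B_\xi$, meaning $\bar\mu(e_\xi\wedge c) = \frac12\bar\mu(c)$ for all $c \in \B_\xi$. We also want the algebra generated by the $e$'s to eventually contain the $d$'s.

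This step is the main obstacle. It needs the homogeneity hypothesis in the following form: for $|\xi|<\kappa$, the algebra $\B_\xi$ has type $<\kappa$, while every principal ideal of $\B$ has type $\kappa$. Hence for each $c\in\B_\xi$, $\B_c$ is not generated by $\B_\xi\restriction c$. We then find an element of $\B_c$ that is ``relatively atomless'' over $\B_\xi$, via conditional expectations $\mathbb{E}(\cdot\mid\B_\xi)$. We first try to split each $c$ in a maximal antichain relatively in half, using the relative version of the splitting lemma. To absorb $d_\xi$, we interleave countably many such relatively independent halves chosen to approximate $d_\xi$. Concretely, we split $d_\xi$ and its complement relative to the previous algebra, and use ω-many independent coins to code the conditional probability. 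This is the bookkeeping of Fremlin's 331 proof, and it is where care is needed at limit stages and when $\kappa=\omega$. In the countable case we can alternatively appeal to the isomorphism of atomless separable measure algebras with Lebesgue measure.

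Finally, the map $[S_t]\mapsto \bigwedge_{t(\xi)=1}e_\xi\wedge\bigwedge_{t(\xi)=0}(1-e_\xi)$ preserves measure on cylinder sets by independence. It therefore extends, via the $\pi$-$\lambda$/Carathéodory uniqueness, to a measure-preserving $\sigma$-homomorphism $\Sigma_\kappa\to\B$. This map has kernel exactly $\mathscr{N}_\kappa$, so it induces an injective measure-preserving homomorphism $\Bk\to\B$. Its range is a complete subalgebra, since a measure-preserving embedding of a measure algebra has complete range. The range contains all $e_\xi$, hence all $d_\xi$, so the map is onto.
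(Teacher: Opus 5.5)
Your strategy, building an independent family of halves that completely generates $\B$, is genuinely different from the proof the paper cites and can be made to work. The endgame is sound, with one correction: the extension from cylinders exists because the cylinder algebra is dense in $\Bk$ for the metric $\bar\mu(x\triangle y)$ and $\B$ is metrically complete, not because of $\pi$-$\lambda$ uniqueness. The real gap is the central recursion, which cannot succeed as stated.

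Suppose each $e_\xi$ is independent over $\B_\xi$, and $\B_\xi$ already contains $d_\eta$ for $\eta<\xi$. An induction shows that the closed algebra of $\set{e_\zeta}{\zeta>\eta}$ is independent of $\B_{\eta+1}$. Taking conditional expectations, $d_\eta$ lies in the closed algebra of all the $e$'s only if it lies in the one generated by $\set{e_\zeta}{\zeta\le\eta}$. So nothing is absorbed ``eventually''. This already fails for $\eta=0$ and $\bar\mu(d_0)=1/3$, since that algebra is $\{0,e_0,1-e_0,1\}$. In general, a single coin independent over $\B_\xi$ absorbs $d_\xi$ only if $\mathbb{E}(\chi d_\xi\mid\B_\xi)$ is $\{0,\frac12,1\}$-valued.

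Your remark about countably many coins points at the repair, but it must \emph{replace} the recursion. At stage $\xi$ you add coins $e_{\xi,n}$, $n<\omega$, that are jointly independent over $\B_\xi$ (by induction, the closed algebra of the earlier coins) and that generate $d_\xi$ together with $\B_\xi$. One way: let $f=\mathbb{E}(\chi d_\xi\mid\B_\xi)$, and let $V,W$ be conditionally uniform over $\B_\xi$ on $d_\xi$ and on $1-d_\xi$. Put $U=fV$ on $d_\xi$ and $U=f+(1-f)W$ on $1-d_\xi$, and take the binary digits of $U$. One must then check that $U$ is uniform and independent of $\B_\xi$, and that $d_\xi=[U<f]$. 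None of this is in your sketch.

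The tool that produces $V$ and $W$ is the \emph{relative} halving lemma. It says that if $\B$ is relatively atomless over a closed subalgebra $C$, then every $a$ has some $h\le a$ with $\bar\mu(h\wedge b)=\frac12\bar\mu(a\wedge b)$ for all $b\in C$. Iterated $\omega$ times, this lemma carries the real content, and it does not follow from the absolute splitting lemma you prove. Relative atomlessness must also be checked for every nonzero $a\in\B$, not only for $c\in\B_\xi$; your type argument does give this.

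Because you need $\omega$ coins per stage, when $\kappa=\omega$ you get $\tau(\B_\xi)=\omega$ after the first stage, and the type argument collapses. You then import Carath\'eodory's theorem, which is exactly the $\kappa=\omega$ case of the statement.

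The proof the paper cites (Fremlin 331I) avoids both problems. It is a back-and-forth that extends a measure-preserving isomorphism between closed subalgebras of $\B$ and $\Bk$ by one generator at a time on each side. For this it needs only the lemma that, over a relatively atomless extension, some element has any prescribed conditional measure. Since each step adds a single element, the finite stages involve finite subalgebras, so $\kappa=\omega$ is handled uniformly and no coin-coding is required. Your approach, once repaired, does yield an explicit isomorphism sending coordinates to coins, and an independent generating family as a by-product.
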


\begin{proof}
For the proof, see the main theorems in \cite[331I]{fremlin2004measure} and \cite[331L]{fremlin2004measure} .
\end{proof}

\subsection{Random algebra as a forcing notion} \label{sec:forcing}

Suppose $\kappa$ is an infinite cardinal. Let $\Sigma^+_\kappa$ denote the set $\Sigma_\kappa \setminus \mathscr{N}_\kappa$, and let $\Bk^+$ denote the set of positive elements of $\Bk$. It is easy to see that the partial order $(\Bk^+,\le)$ is densely embeddable into the pre-order $(\Sigma^+_\kappa, \le)$,\footnote{$(\Sigma^+_\kappa, \le)$ is not a partial order because $p \le q$ and $q \le p$ does not imply $p = q$. However, it does imply $[p]_{\mathscr{N}_\kappa} = [q]_{\mathscr{N}_\kappa}$. The quotient partial order on $\Sigma^+_\kappa$ mod $\mathscr{N}_\kappa$ is thus isomorphic to $(\Bk^+,\le)$.} where for $p,q \in \Sigma^+_\kappa$, $$p \le q, \mbox{ $p$ extends $q$,}\mbox{ if and only if } p \setminus q \in \mathscr{N}_\kappa.$$ For forcing purposes, we will often identify forcing with the  Boolean algebra $\Bk$ with forcing with the pre-order $(\Sigma_\kappa^+,\le)$.

\brm
If $\kappa = \omega$, then the $\sigma$-algebra of Baire sets $\Sigma_\omega$ is equal to the Borel sets of $2^\omega$. If $\kappa > \omega$, then the Borel subsets of $2^\kappa$ in the product topology form a strictly larger family: while every sets $x \in \Sigma_\kappa$ depends only on countably many cylinder sets $S_t$, the open sets in $2^\kappa$ can be made up of uncountably many cylinder sets (and hence there are open sets which are not in $\Sigma_\kappa$). A standard way of extending $\Sigma_\kappa$ to Borel sets is to use the  Carath{\'e}odory extension theorem which yields a $\sigma$-algebra of ``Lebesgue-measurable'' subsets of $2^\kappa$ (that includes all Borel sets and many other sets). See for instance \cite{fremlin2004measure1} for a general discussion of this topic. One can show, and it is also a consequence of Maharam theorem, that the Baire sets are dense in the Lebesgue-measurable sets (mod the null ideal), and hence forcing with these two partial orders is forcing-equivalent. However, the family of Baire sets $\Sigma_\kappa$ is easier to work with (see Lemma \ref{lm:abs} below).
\erm

\subsection{Absolutness for $\sigma$-distributive extensions}

Since every set in $\Sigma_\kappa$ is constructed from some cylinder sets $S_t$ by means of countably many iterations of unions and complements, the measure space $(2^\kappa, \Sigma_\kappa, \lambda_\kappa)$  (and consequently $\Bk$ as well) is absolute in a strong sense with respect to $\sigma$-distributive forcing extensions:

\begin{lemma}\label{lm:abs}
Suppose $\P$ is a $\sigma$-distributive forcing notion and $\kappa$ is an infinite cardinal. Then the random algebras $\Bk^V$ and $\Bk^{V[\P]}$ are isomorphic.
\end{lemma}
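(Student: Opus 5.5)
We plan to show that the natural map $[x]_{\mathscr{N}_\kappa^V} \mapsto [x^{V[\P]}]_{\mathscr{N}_\kappa^{V[\P]}}$ is an isomorphism. Here $x^{V[\P]}$ is the reinterpretation of a Baire set $x \in \Sigma_\kappa^V$ in $V[\P]$ via its countable code. First we fix a coding of Baire sets. Every $x \in \Sigma_\kappa$ arises from cylinder sets $S_t$ by a well-founded countably branching construction of countable height (a Borel code over the countable index set of coordinates in its support). Such a code is a hereditarily countable object built from finitely many ordinals below $\kappa$ at each node. Since $\P$ is $\sigma$-distributive, $\P$ adds no new $\omega$-sequences of ground-model elements. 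Hence it adds no new countable subsets of $\kappa$, no new countable sequences of finite partial functions $\kappa \to 2$, and, by induction on the rank of codes, no new codes. So the set of codes is the same in $V$ and $V[\P]$. Consequently every Baire set of $V[\P]$ is the reinterpretation of a ground-model Baire set, and the map is surjective onto $\Sigma_\kappa^{V[\P]}$.

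Second, we show that the measure of a coded set is computed absolutely: $\lambda_\kappa^V(x) = \lambda_\kappa^{V[\P]}(x^{V[\P]})$. Each $x$ has countable support $A \subseteq \kappa$, so $x = \pi_A^{-1}(y)$ for a Borel $y \subseteq 2^A \cong 2^\omega$, and $\lambda_\kappa(x) = \lambda_\omega(y)$. The measure of a Borel subset of $2^\omega$ given by a code is absolute between transitive models; by Mostowski absoluteness it is a $\Delta^1_1$ computation in the code, and here there are not even new reals. The statements $x \subseteq z$, $x \cap z = \emptyset$ and $x = \bigcup_n x_n$ between coded sets are likewise absolute, since their codes and supports live in $V$.

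Third, we put these together. By absoluteness of measure, $x \in \mathscr{N}_\kappa^V$ iff $x^{V[\P]} \in \mathscr{N}_\kappa^{V[\P]}$, so the map is well defined and injective on the quotients. It is surjective by step one, and it preserves complements and countable joins because code operations commute with reinterpretation. Thus it is an isomorphism of the $\sigma$-complete algebras, which are complete by ccc, and it also preserves $\bar\lambda_\kappa$. Alternatively, having the measure-preserving bijection, one can note that $\Bk^V$ is Maharam homogeneous of type $\kappa$ in $V[\P]$ as well and invoke Theorem \ref{th:Mah}. The direct argument is cleaner, though.

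The main obstacle is step one, which requires a careful rank induction on codes. It shows that every element of $\Sigma_\kappa^{V[\P]}$ has a code in $V$, rather than merely that ground codes remain codes. The key point is that a code is a countable well-founded tree labelled by finite partial functions, and $\sigma$-distributivity ensures that any such countable object in $V[\P]$, whose labels and countable subtrees are hereditarily built from $V$-elements, lies in $V$. We would handle this by an induction on $\omega_1$-many ranks. At each stage, a countable sequence of codes of lower rank, each already in $V$, is itself in $V$ by $\sigma$-distributivity.
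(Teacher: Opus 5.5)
Your proposal is correct and follows the same route as the paper: Borel codes are countable objects, $\sigma$-distributivity adds no new ones, so the canonical correspondence of cylinder sets extends to a measure-preserving bijection $\Sigma_\kappa^V \to \Sigma_\kappa^{V[\P]}$, and this bijection descends to the quotients by the null ideals. You give more detail than the paper by reducing measure, inclusion and equality to a countable support $A$, where $2^A$ is unchanged. Your rank induction can be shortened: a code is already a single $\omega$-sequence of ground-model objects (a well-founded tree on $\omega$ with finite labels), so $\sigma$-distributivity puts it in $V$ directly.
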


\begin{proof}
Even though $2^\kappa$ in $V[\P]$ can be strictly larger than $2^\kappa$ in $V$, the finite functions from $\kappa$ to $2$ are the same, and hence there is a natural correspondence $h$ between the cylinder sets in $V$ and $V[\P]$ mapping $S_t^V = \set{f \in (2^\kappa)^V}{t \sub f}$ to $S^{V[\P]}_t = \set{f \in (2^\kappa)^{V[\P]}}{t \sub f}$.

Every element $x$ of the least $\sigma$-algebra containing the cylinder sets can be coded by a countable sequence of ordinals in $\kappa$ (``Borel codes''), specifying the cylinder sets and the countable tree of unions and complements building $x$ from the given cylinder sets. Since $\P$ is $\sigma$-distributive, it does not add new Borel codes. It follows that the correspondence $h$ between the cylinder sets can be uniquely extended to the whole algebra $\Sigma_\kappa$, obtaining a bijection between $\Sigma_\kappa^V$ and $\Sigma_\kappa^{V[\P]}$ that preserves the measure.

It follows that the quotient Boolean algebras $\Bk =(\Sigma_\kappa/\mathscr{N}_\kappa)^V$ and $\Bk^{V[\P]} = (\Sigma_\kappa/\mathscr{N}_\kappa)^{V[\P]}$ are isomorphic.
\end{proof}

We will use Lemma \ref{lm:abs} in the analysis of the randomized Mitchell forcing (Lemma \ref{pi}).

\subsection{The quotient analysis} \label{sec:k}

For the later purposes, let us review the basic facts regarding the quotient analysis of $\Bk$. 

\begin{Fact}\label{f:iter}
Suppose $\alpha < \kappa$ are infinite cardinals. Using the fact that $\B_\alpha$ is a complete subalgebra of $\B_\kappa$, the following hold:
\bce[(i)]
\item $\Bk$ can be written as a two-stage quotient iteration of complete ccc Boolean algebras: \begin{equation}\label{q:1}
\Bk \mbox{ is forcing equivalent to } \B_\alpha * \Bk/\B_\alpha,
\end{equation} 
\item and moreover,
\begin{equation}\label{q:2}
\B_\alpha \Vdash \Bk/\B_\alpha \cong \dot{\B}^{V[\B_\alpha]}_{[\alpha,\kappa)},
\end{equation}
where $\dot{\B}^{V[\B_\alpha]}_{[\alpha,\kappa)}$ denotes the random algebra $\Bk$ as defined in the generic extension $V[\B_\alpha]$.
\ece
\end{Fact}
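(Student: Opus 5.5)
We prove the two clauses of Fact \ref{f:iter} by working with the product structure of $2^\kappa$ and the theory of complete subalgebras. First we check that $\B_\alpha$ is a complete subalgebra of $\Bk$. Identify $2^\kappa$ with $2^\alpha \times 2^{[\alpha,\kappa)}$. Then $\lambda_\kappa$ is the product $\lambda_\alpha \times \lambda_{[\alpha,\kappa)}$. The map $x \mapsto x \times 2^{[\alpha,\kappa)}$ sends $\Sigma_\alpha$ into $\Sigma_\kappa$, preserves measure, and respects null sets. It therefore induces a measure-preserving Boolean embedding $e:\B_\alpha \to \Bk$. A measure-preserving embedding of probability algebras preserves countable suprema by $\sigma$-additivity. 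Since both algebras are ccc, every supremum is attained by a countable subfamily, so $e$ preserves arbitrary suprema and is a complete embedding. Clause (i) then follows from the general fact that if $\mathcal{C}$ is a complete subalgebra of a complete Boolean algebra $\mathcal{D}$, then $\mathcal{D}$ is forcing equivalent to $\mathcal{C} * \mathcal{D}/\dot G_{\mathcal{C}}$. The quotient is ccc because $\Bk$ is.

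For clause (ii), we let $G$ be $\B_\alpha$-generic over $V$ and compare $\Bk/G$ with $\B^{V[G]}_{[\alpha,\kappa)}$. In $V[G]$, let $r\in 2^\alpha$ be the random real. For each $p \in \Sigma^+_\kappa$ in $V$ and each $y \in 2^\alpha$, write the section as $p_y = \set{z \in 2^{[\alpha,\kappa)}}{(y,z)\in p}$.

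1. Since $p$ has a Borel code, the section $p_r$ can be reinterpreted in $V[G]$ as a Baire subset of $2^{[\alpha,\kappa)}$. It depends on only countably many coordinates, so its code is a countable object in $V[G]$.

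2. Fubini's theorem holds in $V$: the map $y \mapsto \lambda(p_y)$ is measurable and integrates to $\lambda_\kappa(p)$. Genericity of $r$ gives the following: $[p] \in G$-quotient, i.e. $p$ is compatible with every condition in $G$ in the quotient sense, if and only if $\lambda^{V[G]}(p_r)>0$. This uses the fact that $r$ avoids every null set coded in $V$. Concretely, $\set{y}{\lambda(p_y)=0}$ together with the projection of $p$ behaves correctly.

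3. We define $\varphi([p]) = [p_r]$. By step 2 this is well defined and order-preserving modulo null sets on the quotient $\Bk/G$.

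4. It remains to show that the image is dense in $\B^{V[G]}_{[\alpha,\kappa)}$. Every Baire set of positive measure in $V[G]$ has a $\B_\alpha$-name $\dot q$ for a countable code. Since $\B_\alpha$ is ccc and its conditions are measurable sets, a name for a code can be amalgamated in $V$ into a single Baire set $p \subseteq 2^\alpha \times 2^{[\alpha,\kappa)}$ whose $r$-section is $\dot q$. This is the standard fact that measurable functions into codes are given by Borel functions of the random real. Once density is established, the map extends to an isomorphism of completions.

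We expect step 4, together with the "iff" of step 2, to be the main obstacle. This is where the generic section is compared with names, and the ground-model Fubini theorem must be transferred to $V[G]$. To handle it, we will first prove the claim for basic sets, namely finite Boolean combinations of cylinder sets. We then extend it through the countable $\sigma$-algebra construction by induction on Borel codes, using absoluteness of measure computations for codes. Alternatively, we may cite the standard treatment of this quotient analysis, e.g. Jech's or Fremlin's account of product measure algebras, together with Maharam homogeneity via Theorem \ref{th:Mah}. That route would identify the quotient as a homogeneous probability algebra of type $|\kappa\setminus\alpha| = \kappa$ in $V[G]$.
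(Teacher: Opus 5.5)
Your argument is correct in outline. For (i) it matches the paper. The paper writes out the standard dense embedding $k(p)=(\restr{p}{\alpha},[p]_{\dot G_\alpha})$ into $\B_\alpha * \Bk/I_{\dot G_\alpha}$ given by the canonical projection, which is the general complete-subalgebra fact you cite. You also check that $\B_\alpha\to\Bk$ is a complete embedding, which the paper takes as given.

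For (ii) the routes differ. The paper only says that the quotient can be given a probability measure in $V[G_\alpha]$ and then identified with $\B_\kappa^{V[G_\alpha]}$ by Maharam's theorem (Theorem \ref{th:Mah}); this is your fallback. Your main argument instead builds the isomorphism explicitly as $[p]_G\mapsto[p_r]$.

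The Maharam route is shorter to state, but it still has to do three things:
\begin{enumerate}
\item define the measure on the quotient, and the natural choice $\bar\mu([p]_G)=\lambda(p_r)$ is exactly your Step 2;
\item verify in $V[G]$ $\sigma$-completeness, $\sigma$-additivity, homogeneity and Maharam type $\kappa$;
\item accept that it yields only an abstract isomorphism.
\end{enumerate}
Your section map needs no homogeneity or type computation, and it is canonical. For $\alpha<\xi<\kappa$ it carries $\B_\xi/G$ onto $\B^{V[G]}_{[\alpha,\xi)}$. This is precisely the coherence tacitly used in item (c) of the proof of Lemma \ref{pi}, where $\B_\xi$-names are transferred to $\B_\alpha * \dot{\B}_{[\alpha,\xi)}$-names. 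An isomorphism supplied by Maharam's theorem need not respect these subalgebras.

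Two small points.
\begin{itemize}
\item In Step 2 the clean formulation is as follows. The projection $\restr{p}{\alpha}$ is represented by $E_p=\{y : \lambda(p_y)>0\}$. So $[p]_G\neq 0$ iff $r\in E_p^{V[G]}$ iff $\lambda(p_r)>0$, the last equivalence by absoluteness of the Fubini function computed from the code of $p$.
\item In Step 4, do not amalgamate ``a Borel function of $r$ into codes'' naively, because the universal set $\{(c,z) : z\in B_c\}$ is not Borel. You can bound the ranks of the codes by a countable ground-model ordinal using the ccc. More cleanly, show that $\{p_r : p\in\Sigma_\kappa^V\}$ is closed in $V[G]$ under complements and countable unions (mix over countable maximal antichains of $\B_\alpha$) and contains the cylinders. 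Then it equals $\Sigma^{V[G]}_{[\alpha,\kappa)}$, which gives surjectivity, not just density.
\end{itemize}
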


A general reference for this fact is Fremlin \cite[552P]{fremlin5}. Let us provide a quick review of the basic ideas in order to fix notation.

A sketch of proof of (\ref{q:1}). For every $p \in \Bk$ let $\restr{p}{\alpha} \in \B_\alpha$ denote its canonical projection to $\B_\alpha$.

Suppose $G_\alpha$ is a $\B_\alpha$-generic filter. In $V[G_\alpha]$, let $I_{G_\alpha} \sub \B_\kappa$ be the ideal of  elements of $\B_\kappa$ whose projections are incompatible with the generic filter $G_\alpha$:
\begin{equation}\label{q:Ig}
I_{G_\alpha} = \set{p \in \B_\kappa}{\restr{p}{\alpha} \not \in G_\alpha}.
\end{equation}
For every $p \in \Bk$, let us define in $V[G_\alpha]$ the equivalence class of $p$:
\begin{equation}
[p]_{G_\alpha} = \set{q \in \B_\kappa}{p \triangle q \in I_{G_\alpha}},
\end{equation}
where $\triangle$ is the symmetric difference on the Boolean algebra. The quotient forcing algebra $\B_\kappa /I_{G_\alpha}$ is composed of the equivalence classes $[p]_{G_\alpha}$.

Back in $V$, let us define a dense embedding $k: \B_\kappa \to \B_\alpha * \B_\kappa / I_{\dot{G}_\alpha}$ as follows:
\begin{equation}\label{k}
k(p) = (\restr{p}{\alpha}, [p]_{\dot{G}_\alpha}),
\end{equation}
where $[p]_{\dot{G}_\alpha}$ is a name for the equivalence class of $p$ in the Boolean quotient $\B_\kappa/I_{\dot{G}_\alpha}$. We will denote the quotient algebra $\B_\kappa / I_{\dot{G}_\alpha}$ by $\Bk/\B_\alpha$.

For (\ref{q:2}), we limit ourselves to stating that it is possible to equip the algebra $\Bk/\B_\alpha$ in $V[G_\alpha]$ by a probability measure, making it a probability measure algebra, and use Maharam theorem to argue that it is isomorphic to $\B_\kappa$ as defined in $V[G_\alpha]$. To emphasize that we deal with a quotient analysis, we denote $\B_\kappa$ in $V[G_\alpha]$ by $\B_{[\alpha,\kappa)}^{V[G_\alpha]}$.

In view of (\ref{q:2}), we identify the embedding $k$ from (\ref{q:1}) with a dense embedding from $\B_\kappa$ to $\B_\alpha * \dot{\B}_{[\alpha,\kappa)}^{V[\B_\alpha]}$:
\begin{equation}\label{q:3}
k(p) = (\restr{p}{\alpha}, [p]_{\dot{G}_\alpha}),
\end{equation}
where $[p]_{\dot{G}_\alpha}$ is viewed as an element of $\dot{\B}_{[\alpha,\kappa)}^{V[\B_\alpha]}$.

\section{Random algebras and preservation of non-special trees}

\subsection{The $\sigma$-finite chain condition}

Let us discuss a useful property of random algebras.

\begin{definition}\label{def:sfcc}
A forcing notion $\P$ is  \emph{$\sigma$-finite-cc} if there are subsets $\P_k \sub \P$ for $k < \omega$ such that:
\begin{compactenum}[(i)]
\item
$\bigcup_{k<\omega}\P_k = \P$,
\item
For every $k < \omega$ there exists a number $n_k<\omega$ such that all antichains $A \sub \P_k$ have size $\le n_k$.
\end{compactenum}
Let $\SFCC$ denote the class of all $\sigma$-finite-cc forcing notions.
\end{definition}

The class $\SFCC$ properly includes all $\sigma$-centered forcings (for which every $\P_n$ is centered) and $\sigma$-linked forcings (for which every $\P_n$ contains pairwise compatible elements). A canonical example of a forcing in $\SFCC$ (which is not $\sigma$-linked) is the probability measure algebra $\B_\kappa$ for every infinite cardinal $\kappa$.\footnote{Define $\P_0 = \emptyset$ and for $k>0$, let $\P_k$ contain all conditions $p \in \B_\kappa$ which have measure in the interval $(\frac{1}{k+1}, \frac{1}{k}]$. Then all antichains in $\P_k$ have size at most $k$.} Note that while all $\sigma$-centered forcings have size at most $2^\omega$, $\sigma$-finite-cc forcing can be arbitrarily large (as illustrated by the $\Bk$'s).

An important property of $\sigma$-finite-cc forcings is that for any $\P \in \SFCC$ and any forcing $\Q$, 
\begin{equation}\label{abs}
\Q \Vdash \P \in \SFCC.
\end{equation}
To argue for (\ref{abs}), it is easy to check that any sequence $\seq{\P_k}{k<\omega}$ from Definition \ref{def:sfcc} still has the same properties in $V[\Q]$ (with the same parameters $n_k$).

\subsection{Preservation of non-special trees and $\sigma$-finite-cc forcings}

Suppose $T$ is a tree of height $\omega_1$ without cofinal branches (of arbitrary size). 

\begin{lemma}\label{lm:finite}
Suppose there is $f: T \to \omega$ which is finite to one on chains, i.e., for every $n$, $f^{-1}{}"\{n\} \cap B$  is finite for any chain $B$. Then $T$ is special.
\end{lemma}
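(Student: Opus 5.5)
Given $f: T \to \omega$ that is finite-to-one on chains, the plan is to refine $f$ into a function $g: T \to \omega \times \omega$ that is 1-1 on chains. Since $\omega\times\omega$ is countable, $T$ is then special by Definition \ref{def:special}(i).

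For each $t \in T$, consider the set
$$C_t = \set{s \in T}{s \le_T t \mbox{ and } f(s) = f(t)}.$$
Because $T$ is a tree, the predecessors of $t$ together with $t$ form a chain (they are well-ordered by $<_T$). So $C_t$ is a subset of the chain $\{s : s \le_T t\}$ lying inside the fiber $f^{-1}{}"\{f(t)\}$. By hypothesis $C_t$ is finite. Define
$$g(t) = (f(t), |C_t|).$$

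Now check that $g$ is 1-1 on chains. Suppose $s <_T t$ and $f(s) = f(t)$. Then $C_s \sub C_t$, since every $u \le_T s$ with $f(u)=f(s)$ also satisfies $u \le_T t$ and $f(u)=f(t)$. Moreover $t \in C_t \setminus C_s$, so $|C_s| < |C_t|$ and hence $g(s) \neq g(t)$. If instead $f(s) \neq f(t)$, then $g(s) \ne g(t)$ already in the first coordinate.

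Composing $g$ with a bijection $\omega \times \omega \to \omega$ gives a function $T \to \omega$ that is 1-1 on chains, which proves the lemma. There is no real obstacle. The only point needing care is that the finiteness hypothesis is applied to the specific chain $\{s : s\le_T t\}$, which is a chain precisely because $T$ is a tree. The argument does not use the height of $T$ or the absence of cofinal branches.
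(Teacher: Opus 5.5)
Your proof is correct and is essentially the paper's argument. The paper defines the same refinement $f^*(t)=(f(t),n)$ by induction on levels, taking $n$ to be one more than the largest second coordinate among predecessors of $t$ with the same $f$-value; this works out to exactly your $|C_t|$, and your direct, non-inductive definition makes both the finiteness and the strict increase along chains more transparent.
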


\begin{proof}
Define $f^*: T \to \omega \x \omega$ by induction on levels of $T$. $f^*$ will be injective on chains. Suppose $f^*$ is defined on $\restr{T}{\alpha}$. For every $t \in T_\alpha$, define 

\begin{equation}
f^*(t) = (f(t),n), 
\end{equation}

where $n = 1 + \max\set{k}{\exists s < t, f(s) = f(t) \mbox{ and } f^*(s) = (f(s),k)}$. It is easy to check that $f^*$ is well-defined and injective on chains. By composing $f^*$ with any bijection between $\omega \x \omega$ and $\omega$ one obtains a specializing function.
\end{proof}

\begin{theorem}\label{th:random1}
Suppose $\P \in \SFCC$. Suppose $T$ is a non-special tree of height $\omega_1$ without cofinal branches. Then $T$ is non-special in $V[\P]$.\footnote{An analogous result for $\sigma$-centered forcings was already observed by Stejskalova (private discussion).}
\end{theorem}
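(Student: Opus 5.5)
We argue by contraposition, using Lemma \ref{lm:finite} in the ground model. Suppose $p \in \P$ and $\dot{g}$ is a $\P$-name such that $p \Vdash \dot{g}: \check{T} \to \omega$ is injective on chains. We will define in $V$ a function $f: T \to \omega$ (really into $\omega \times \omega$, coded into $\omega$) that is finite-to-one on chains. Since $T$ has no cofinal branches, Lemma \ref{lm:finite} will then make $T$ special in $V$, contradicting the hypothesis. Note also that $T$ gains no cofinal branches after forcing with $\P$: a $\sigma$-finite-cc forcing is ccc, and if $T$ had no cofinal branch but acquired one, the usual splitting argument would produce uncountable antichains. In fact we do not need this, since specialness in $V[\P]$ already implies there are no cofinal branches there.

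Fix the decomposition $\P = \bigcup_k \P_k$ with bounds $n_k$ from Definition \ref{def:sfcc}; we may restrict to conditions below $p$. For each $t \in T$, choose $q_t \le p$ and $m_t < \omega$ with $q_t \Vdash \dot{g}(t) = m_t$, and let $k_t$ be such that $q_t \in \P_{k_t}$. Define $f(t) = (m_t, k_t)$, coded as a natural number.

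We claim $f$ is finite-to-one on chains. Suppose $B$ is a chain and $t_0 < t_1 < \dots < t_{N}$ in $B$ all have $f(t_i) = (m,k)$, with $N \ge n_k$. Then $\{q_{t_i}\}_{i \le N}$ are $N+1 > n_k$ elements of $\P_k$. They cannot form an antichain, so some pair $q_{t_i}, q_{t_j}$ with $i \ne j$ is compatible. A common extension would force $\dot{g}(t_i) = m = \dot{g}(t_j)$ with $t_i <_T t_j$, contradicting the fact that $p$ forces $\dot{g}$ to be injective on chains. (The conditions $q_{t_i}$ are distinct because compatible equal conditions would still yield the contradiction; in any case the argument uses the indexed family, and if $q_{t_i} = q_{t_j}$ they are trivially compatible.) Hence each fiber of $f$ meets each chain in at most $n_k$ points, so $f$ is finite-to-one on chains and Lemma \ref{lm:finite} applies.

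The only delicate point is making the antichain-size bound apply to a possibly repeated family of conditions. This is handled by observing that a repetition already gives compatibility, and that distinct pairwise incompatible members of $\P_k$ number at most $n_k$. Otherwise the argument is routine. Note that the finite bound $n_k$ is essential: mere ccc would only give countable fibers on chains, which is not enough for Lemma \ref{lm:finite}.
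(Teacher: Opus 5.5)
Your proof is correct and is essentially the paper's argument. The paper defines $F(t)=\langle k,n\rangle$ from the least $k$ such that some condition in $\P_k$ decides $\dot f(t)$ and the least value so decided; you instead record the value and index of one chosen deciding condition. Both then use the bound $n_k$ in the same way to get finite-to-one on chains, and apply Lemma \ref{lm:finite}. You also work below an arbitrary $p$ rather than assuming $1_\P$ forces specialness.

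One caution about your side remark: ccc alone does not prevent adding cofinal branches. Forcing with a Suslin tree is ccc and adds a branch through it. The correct reason for $\sigma$-finite-cc forcings is that they are Knaster, but as you note, the remark is not needed for the proof.
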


\begin{proof}
Let $\seq{\P_k}{k<\omega}$ and numbers $\seq{n_k}{k<\omega}$ be as in Definition \ref{def:sfcc}.

Suppose for contradiction $1_\P \Vdash \dot{f}: T \to \omega$ is 1-1 on chains.

Define in $V$:
\begin{equation}\label{fstar} F: T \to \omega \times \omega \end{equation} so that $F(t)$ is a pair $\la k, n\ra$ such that $k$ is the least number $k$ such that there is $p \in \P_k$ which decides $\dot{f}(t)$ and $n$ is the least $n$ which is decided as the value of $\dot{f}(t)$ by some condition in $\P_k$ (there are at most $n_k$ different natural numbers forced by elements in $\P_k$ to be equal to $\dot{f}(t)$ because every antichain in $\P_k$ has size at most $n_k$, but this is not used for the definition). 

\begin{lemma} \label{lm:f-sigma}
The function $F$ is finite to one on chains.
\end{lemma}

\begin{proof}
Suppose for contradiction that there is $\la k, n \ra$ such that $F$ is constant with value $\la k, n\ra$ on an infinite chain $B$ in $T$. We will derive a contradiction from a weaker assumption that $B$ has length $n_k+1$. Let us write $B$ as $\seq{b_i}{i<n_k+1}$, where $b_i < b_{i+1}$ for each $i$. Fix for each $i<n_k+1$ a condition $p_i \in \P_k$ such that $$p_i \Vdash \dot{f}(b_i) = n.$$ Since all $p_i$ are in $\P_k$, and all antichains in $\P_k$ have size at most $n_k$, there must be some $i < j$ such that $p_i$ and $p_j$ are compatible. But a lower bound of these conditions forces $\dot{f}(b_i) = \dot{f}(b_j)$ and yet $b_i < b_j$ in the tree order. A contradiction.
\end{proof} By Lemma \ref{lm:finite}, there is in $V$ a specializing function, and hence Theorem \ref{th:random1} is proved.
\end{proof}

\begin{corollary}
For any infinite $\kappa$, the random algebra $\B_\kappa$ does not specialize trees of height $\omega_1$ without cofinal branches.
\end{corollary}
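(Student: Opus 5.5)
The corollary follows by instantiating Theorem \ref{th:random1} with $\P = \B_\kappa$, so the plan is simply to verify that $\B_\kappa$ belongs to $\SFCC$ and then quote the theorem. Concretely, I will work with the forcing $(\B_\kappa^+,\le)$ and, as in the footnote following Definition \ref{def:sfcc}, set $\P_0=\emptyset$ and, for $k>0$,
\[
\P_k=\set{p\in\B_\kappa^+}{\tfrac{1}{k+1}<\bar{\lambda}_\kappa(p)\le\tfrac1k}.
\]
Since every positive element has measure in $(0,1]$, these sets cover $\B_\kappa^+$, which is clause (i) of Definition \ref{def:sfcc}.

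For clause (ii), take an antichain $A\sub\P_k$. In the Boolean algebra, incompatibility of positive elements means their meet is $0$, so the elements of $A$ are pairwise disjoint. By finite additivity, which is part of Definition \ref{def:ma}(ii), any $m$ distinct elements of $A$ have join of measure greater than $m/(k+1)$. Because the total measure is $1$, this forces $m<k+1$, that is, $|A|\le k$. So we may take $n_k=k$, and $\B_\kappa\in\SFCC$.

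Now let $T$ be a non-special tree of height $\omega_1$ without cofinal branches. Theorem \ref{th:random1} applied to $\P=\B_\kappa$ gives that $T$ remains non-special in $V[\B_\kappa]$, which is what "does not specialize" means here. There is no real obstacle. The only points needing care are that positive measure is required in order for $\P_0,\P_1,\dots$ to cover the whole forcing, and that disjointness, and not merely incompatibility in some weaker sense, is what lets us add up the measures.
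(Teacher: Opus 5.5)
Your proposal is correct and takes the same route as the paper: the corollary follows by applying Theorem \ref{th:random1} to $\P=\B_\kappa$, with membership in $\SFCC$ witnessed by the measure stratification $\P_k=\set{p}{\tfrac{1}{k+1}<\bar{\lambda}_\kappa(p)\le\tfrac1k}$ given in the footnote to Definition \ref{def:sfcc}. Your additivity computation, showing that antichains in $\P_k$ have size at most $k$, spells out the bound that the footnote only asserts.
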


\section{Rado's Conjecture and the ``randomized'' Mitchell forcing}

Let us show that the random algebra can be used to obtain a model of $\RC$ together with the values of the  Cicho{\' n} diagram corresponding to forcing with random algebra.

Let us start by defining a ``randomized'' Mitchell forcing. For the definition, recall that if $p \in \B_\kappa$ and $\xi < \kappa$ is a cardinal,  $\restr{p}{\xi}$ denotes a natural restriction of $p$ to $\B_\xi$ (see Section \ref{sec:k} for more details on the quotient analysis of $\Bk$).

\begin{definition}\label{def:MR}
Suppose $\kappa$ is an infinite cardinal (typically an inaccessible, but the definition is formally applicable to all cardinals). Let $\M^R_\kappa$ contain as conditions pairs $(p,q)$ such that 
\bce[(i)]
\item 
$p \in \B_\kappa$.
\item
$q$ is a function with a countable domain composed of  \emph{successor cardinals} $\xi < \kappa$ such that for all $\xi \in \mathrm{dom}(q)$, $$\B_\xi \Vdash q(\xi) \in (\dot{\Add}(\omega_1,1))^{V[\B_\xi]},$$ where $\Add(\omega_1,1)$ denotes the Cohen forcing for adding one new subset of $\omega_1$.
\ece
The ordering is defined as follows: 

$(p,q) \le (p',q') \mbox{ if and only if }p \le p' \mbox{ and } \dom{q'} \sub \dom{q} \mbox{ and } \forall \xi \in \dom{q'}$, $$\restr{p}{\xi} \Vdash q(\xi) \le q'(\xi).$$
\end{definition}

\brm \label{rm:MR}
In analogy with the quotient notation for $\B_\kappa$, which is densely embeddable into $\B_\alpha * \dot{\B}_{[\alpha,\kappa)}^{V[\B_\alpha]}$, for cardinals $\alpha < \kappa$, we apply the same notation for $\M^R_\kappa$ in preparation for Lemma \ref{pi} and Lemma \ref{th:random3}. Suppose $\alpha < \kappa$ is a cardinal (not necessarily inaccessible) and let $G_\alpha$ be $\M^R_\alpha$-generic. Then in $V[G_\alpha]$, the Mitchell forcing $\M^R_{[\alpha,\kappa)}$ denotes the forcing in Definition \ref{def:MR}, with the modification that it is defined in an obvious way with the random algebra $\B_{[\alpha,\kappa)}$ and the domains of the functions $q$ on the second coordinates are countable sets of successor cardinals in the \emph{open} interval $(\alpha,\kappa)$, i.e., $\alpha$ itself is not in the domain of $q$ (this is relevant just for Lemma \ref{th:random3}).
\erm

Suppose for the rest of the section that $\kappa$ is inaccessible. The forcing $\M^R_\kappa$ is a natural modification of the forcing introduced by Mitchell in \cite{M:tree}. Abraham \cite{ABR:tree} provided a ``product-style'' analysis of Mitchell's forcing with the Cohen forcing $\Add(\omega,\kappa)$ for adding $\kappa$ many new subsets of $\omega$, instead of $\Bk$, on the first coordinate. We will not repeat all the details of constructions in \cite{ABR:tree} here; it is easy to check that Abraham's analysis can be applied for $\M^R_\kappa$ with obvious adaptations, and yields in particular the following:

\begin{equation}\label{dist}
\M_\kappa^R \cong \B_\kappa * \dot{Q}_\kappa,
\end{equation}

for some $\sigma$-distributive forcing $\dot{Q}_\kappa$, and

\begin{equation}\label{cl}
\mbox{there is a projection onto }\M^R_\kappa \mbox{ from }\B_\kappa \x \T_\kappa,
\end{equation}
for some $\sigma$-closed forcing $\T_\kappa$ (the ``term forcing'').

It is easy to check that $\M^R_\kappa$ collapses cardinals in the interval $(\omega_1,\alpha)$ and forces $2^\omega = \alpha = \omega_2$.

The following projection lemma is crucial (see Section \ref{sec:k} for notation). It contains some new ideas related to $\Bk$ which are not present in \cite{ABR:tree} so we will give a proof. 

\begin{lemma}\label{pi}
Suppose $\alpha < \kappa$ are inaccessible cardinals. 
Then \begin{equation}\label{e}
\M^R_\kappa \mbox{ is densely embedded in }\M^R_\alpha * \dot{\M}^R_{[\alpha,\kappa)},
\end{equation}
where $\dot{\M}^R_{[\alpha,\kappa)}$ is a name for the randomized Mitchell forcing in the extension $V[\M^R_\alpha]$ (see Remark \ref{rm:MR}).
In particular, if $G_\alpha$ is $\M^R_\alpha$-generic over $V$, there is in $V[G_\alpha]$ a projection 
\begin{equation}\label{pr}\pi_\alpha: \B_{[\alpha,\kappa)}^{V[G_\alpha]} \x \T_\alpha \to \M^R_\kappa / G_\alpha\end{equation} for some $\T_\alpha$ which is $\sigma$-closed in $V[G_\alpha]$.
\end{lemma}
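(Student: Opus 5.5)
We will define the embedding explicitly and then obtain the projection by applying Abraham's analysis (\ref{cl}) inside $V[G_\alpha]$.

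\textbf{The map $e$.} For $(p,q) \in \M^R_\kappa$, split $q$ into $q\restriction \alpha$ (the part with $\dom{q}\cap\alpha$) and $q\restriction[\alpha,\kappa)$. Since $\alpha$ is a limit cardinal, it is not in $\dom{q}$, so $\dom{q}\setminus\alpha$ is a countable set of successor cardinals in the open interval $(\alpha,\kappa)$, as required in Remark \ref{rm:MR}. We set
\[
e(p,q) = \bigl((\restr{p}{\alpha}, q\restriction\alpha),\ (\dot{[p]}_{\alpha}, \dot{q}^*)\bigr).
\]
Here $\dot{[p]}_\alpha$ is the name for the image of $p$ in $\B_{[\alpha,\kappa)}^{V[\B_\alpha]}$ via the dense embedding $k$ of (\ref{q:3}). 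The name $\dot{q}^*$ is obtained from $q\restriction[\alpha,\kappa)$ as follows. For each $\xi \in \dom{q}\setminus\alpha$, the value $q(\xi)$ is a $\B_\xi$-name. Using $\B_\xi \cong \B_\alpha * \dot{\B}_{[\alpha,\xi)}$ (Fact \ref{f:iter}), we reinterpret it as a $\B_{[\alpha,\xi)}$-name over $V[\B_\alpha]$. This is legitimate because $\M^R_\alpha$ factors as $\B_\alpha * \dot Q_\alpha$ with $\dot Q_\alpha$ $\sigma$-distributive by (\ref{dist}).

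\textbf{Why the reinterpretation is legitimate.} By Lemma \ref{lm:abs}, the random algebras $\B_{[\alpha,\xi)}$ computed in $V[\B_\alpha]$ and in $V[G_\alpha]$ are isomorphic. Also $\Add(\omega_1,1)$ is the same in the further $\B_{[\alpha,\xi)}$-extensions of $V[\B_\alpha]$ and of $V[G_\alpha]$, since $\dot Q_\alpha$ adds no countable sequences. So $\dot{q}^*(\xi)$ names a condition of $\Add(\omega_1,1)^{V[G_\alpha][\B_{[\alpha,\xi)}]}$, as required.

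\textbf{Order preservation and density.} Order preservation follows directly from the definitions. The coordinate $\restr{p}{\xi}\Vdash q(\xi)\le q'(\xi)$ translates to $(\restr{p}{\alpha}, [\restr{p}{\xi}])\Vdash\ldots$ via $k$. For density, take a condition $((p_0,q_0),(\dot r,\dot s))$ in the iteration. First strengthen $(p_0,q_0)$ so that it decides a countable set $d$ containing $\dom{\dot s}$; this uses that $\M^R_\alpha$ is $\omega_1$-preserving and that $\dot Q_\alpha$ is $\sigma$-distributive. Next find $p\in\B_\kappa$ with $\restr{p}{\alpha}\le p_0$ such that $k(p)$ lies below $(p_0,\dot r)$, using density of $k$. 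Finally, assemble $q$ by taking $q_0$ together with $\B_\xi$-names for $\dot s(\xi)$, $\xi\in d$, pulled back through $\B_\xi\cong\B_\alpha*\dot\B_{[\alpha,\xi)}$.

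\textbf{Main obstacle.} The delicate point is the last step. The names $\dot s(\xi)$ are $\M^R_\alpha$-names, not merely $\B_\alpha$-names, so they may depend on the Cohen-at-$\omega_1$ part $\dot Q_\alpha$. To get around this, I would first extend in $\M^R_\alpha$ to decide each $\dot s(\xi)$ as a $\B_{[\alpha,\xi)}$-name in the ground of $V[\B_\alpha]$. The $\sigma$-distributivity of $\dot Q_\alpha$ over $V[\B_\alpha]$ should make this possible: the relevant countable objects are $\B_{[\alpha,\xi)}$-names for countable partial functions, coded by countable sequences from $V[\B_\alpha]$. Countably many such decisions can be made by a single condition.

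\textbf{The projection.} Once (\ref{e}) holds, work in $V[G_\alpha]$. There $\M^R_{[\alpha,\kappa)}$ is a randomized Mitchell forcing, and Abraham's term-forcing analysis (\ref{cl}) applies verbatim. Let $\T_\alpha$ consist of countable functions $q$ on successor cardinals in $(\alpha,\kappa)$ whose values are $\B_{[\alpha,\xi)}$-names for conditions in $\Add(\omega_1,1)$. Order them by $q\le q'$ iff $1\Vdash q(\xi)\le q'(\xi)$ for all $\xi\in\dom{q'}$. Then $\T_\alpha$ is $\sigma$-closed in $V[G_\alpha]$: given a descending sequence, take coordinatewise names for the union, which is possible by the fullness of names and the $\sigma$-closure of $\Add(\omega_1,1)$ in each intermediate extension. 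The map $\pi_\alpha(p,q)=(p,q)$ is a projection $\B_{[\alpha,\kappa)}\x\T_\alpha\to\M^R_{[\alpha,\kappa)}$, which is identified with $\M^R_\kappa/G_\alpha$ by (\ref{e}). To check the projection property, given $(p',q')\le(p,q)$ in $\M^R$, mix names to produce $q''\le_{\T} q$ with $\restr{p'}{\xi}\Vdash q''(\xi)=q'(\xi)$.
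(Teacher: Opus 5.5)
Your proposal is correct and follows the paper's proof essentially step for step. It uses the same embedding, built from $k$ and the isomorphism of Lemma~\ref{lm:abs}, and the same density argument: a ground-model countable domain, plus countable names for the $\Add(\omega_1,1)$-coordinates decided via the ccc of the random algebra and the $\sigma$-distributivity of $\dot Q_\alpha$. It gets the projection, as the paper does, from Abraham's term-forcing analysis carried out in $V[G_\alpha]$, which you spell out more explicitly. One small addition: the reduction you describe for the names $\dot s(\xi)$ must also be applied to $\dot r$ (deciding its Baire code as a $\B_\alpha$-name) before you invoke the density of $k$; the paper glosses over this step in the same way.
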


\begin{proof}
To prove (\ref{e}), we will define  a dense embedding $i$ which maps conditions in $\M^R_\kappa$ into conditions in $\M^R_\alpha * \dot{\M}^R_{[\alpha,\kappa)}$ in the order-preserving way. The strategy to build $i$ is very similar to \cite[Lemma 2.12]{ABR:tree}, so we will only summarize the key steps and emphasize new points. 

Recall the dense embedding $k: \Bk \to \B_\alpha * \dot{\B}_{[\alpha,\kappa)}^{V[\B_\alpha]}$ from (\ref{q:3}) in Section \ref{sec:k}. Let us write $k(p)$ as $(k^0(p),k^1(p))$.

Let us now proceed to define the dense embedding $i$. The embedding $i$ maps  a condition $(p,q)$ in $\M^R_\kappa$ to a condition $((\restr{p}{\alpha},\restr{q}{\alpha}, (\bar{p},\bar{q}))$ in $\M^R_\alpha * \dot{\M}^R_{[\alpha, \kappa)}$, where:

\medskip
\bce[(a)]
\item
$(\restr{p}{\alpha},\restr{q}{\alpha})$ is the natural restriction of $(p,q)$ to $\M^R_\alpha$.
\item \label{it:b}
The condition $k^1(p)$ is formally a $\B_\alpha$-name for a condition in $\B_{[\alpha,\kappa)}^{V[\B_\alpha]}$, but using the canonical isomorphism $h$ from Lemma \ref{lm:abs}, we can identify it with an $\M^R_\alpha$-name $k^1(p)_h$ for the same condition in $\B_{[\alpha,\kappa)}^{V[\M^R_\alpha]} \cong \B_{[\alpha,\kappa)}^{V[\B_\alpha]}$. Let $\bar{p}$ be equal to $k^1(p)_h$.
\item 
The condition $\bar{q}$ is an $\M^R_\alpha$-name for a function with its domain equal  to the domain of $q$ restricted to $[\alpha,\kappa)$. For every successor cardinal $\xi \in \dom{q} \cap [\alpha,\kappa)$, $\bar{q}(\xi)$  is an $\M^R_\alpha$-name for the $\B_{[\alpha, \xi)}^{V[\M^R_\alpha]}$-name for a condition in $\Add(\omega_1,1)^{V[\M_\alpha^R * \dot{\B}_{[\alpha, \xi)}]}$ which corresponds to the $\B_\xi$-name $q(\xi)$ for a condition in $\Add(\omega_1,1)^{V[\B_\xi]}$. This correspondence is formally defined by means of the embeddings $k$ and $h$ similarly as we used them in item (\ref{it:b}).
\ece
\medskip
Let us check that $i$ is dense.

Suppose $((r,s),a)$ is a condition in $\M^R_\alpha * \dot{\M}^R_{[\alpha,\kappa)}$. Then there is $(r',s') \le (r,s)$ which forces that $a$ is equal to a condition $(p',q')$ in $\dot{\M}^R_{[\alpha,\kappa)}$. In particular, $p'$ is an $\M^R_\alpha$-name for an element of the algebra $\B_{[\alpha,\kappa)}^{V[\M^R_\alpha]}$. Using the dense embedding $k$ from (\ref{q:3}) and the isomorphism $h$ from Lemma \ref{lm:abs}, there is a condition $p \in \B_\kappa$ such that $\restr{p}{\alpha} \le r'$ and $\restr{p}{\alpha} \Vdash k^1(p)_h \le p'$.

With regard to $q'$, we can assume without loss of generality that its domain is a ground model countable subset of successor ordinals in the interval $[\alpha,\kappa)$ (because $\B_\alpha$ is ccc and the $\sigma$-distributive quotient $\M^R_\alpha / \B_\alpha$ does not add new countable sets). Exactly as in \cite[Lemma 2.12]{ABR:tree}, using the ccc of the random algebra, we can assume that for every $\xi$ in the domain of $q'$, $q'(\xi)$ is countable ground-model name for an element of $\Add(\omega_1,1)$ in $V[\M^R_\alpha * \dot{\B}_{[\alpha,\kappa)}]$. Again using the correspondence ensured by the embedding $k,h$, we can find $q$ with the same domain as $q'$ such that $\restr{p}{\alpha}$ forces that $q$ below $\alpha$ extends $s'$ and $$i(p,q) \le ((r',s'),(p',q')),$$ as required.

Finally, the existence of the projection in (\ref{pr}) follows from (\ref{e}) by applying (\ref{dist}) in the extension $V[\M^R_\alpha]$.
\end{proof}

Let us prove the main theorem. We will not give the definitions of the cardinal invariants, but the reader can find the definitions and facts in any standard book on the subject (such as \cite{BJ:settheory}, \cite{Blasshandbook}, or \cite{JECHbook}; see also \cite{F:u} which states analogies and differences with respect to generalized cardinal invariants of higher Baire spaces $\kappa^\kappa$ and provides a useful quick summary). 

\begin{theorem}\label{th:random2}  
Suppose $\kappa$ is strongly compact. Then $\M^R_\kappa$ forces:
\bce[(i)]
\item \label{r1}
$\RC + 2^\omega = \omega_2$,
\item  \label{r3}
$\mathfrak{d}=\omega_1$, $\covN = \omega_2$, $\nonN = \omega_1$.
\ece
\end{theorem}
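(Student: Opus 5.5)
The proof follows Todorcevi\'c's and Zhang's template for $\RC$, with Theorem \ref{th:random1} used in place of the Cohen-forcing argument. Part (ii) then comes from the representation (\ref{dist}) together with the standard behaviour of random reals.

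\textbf{Part (i).} The equality $2^\omega = \omega_2$ comes from Abraham's analysis adapted to $\M^R_\kappa$. The projection (\ref{cl}) from $\B_\kappa \x \T_\kappa$, which is $\kappa$-cc by a $\Delta$-system argument since $\kappa$ is inaccessible, preserves $\omega_1$ and $\kappa$. The Cohen subsets of $\omega_1$ at successor cardinals $\xi$ collapse everything in $(\omega_1,\kappa)$. The random algebra adds $\kappa$ reals, and since $|\B_\kappa|^\omega = \kappa$ there are no more.

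For $\RC$, let $G$ be $\M^R_\kappa$-generic and let $T \in V[G]$ be a tree of height $\omega_1$, of size $\lambda$, all of whose subtrees of size $\omega_1$ are special. Suppose toward a contradiction that $T$ is non-special.
\begin{enumerate}
\item Take a $\lambda$-supercompactness-like fine elementary embedding $j: V \to M$ with critical point $\kappa$, obtained from a fine $\kappa$-complete ultrafilter on $\mathcal{P}_\kappa(\lambda')$ for large $\lambda'$. Choose it so that $j''T$ is covered by some $S \in M$ with $|S|^M < j(\kappa)$.
\item Use Lemma \ref{pi} in $M$ to factor $j(\M^R_\kappa)$ as $\M^R_\kappa * \dot{\M}^R_{[\kappa,j(\kappa))}$. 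Strong compactness gives no master condition, but none is needed: Todorcevi\'c's argument only requires that $T$ maps into $j(T)$ via $j''T$.
\item In $M[G][H]$, where $H$ is the tail generic, $j''T \cong T$ sits as a subtree of $j(T)$ of size less than $j(\kappa)$, which is $\omega_2$ there. It therefore has size $\omega_1$ there.
\item Lift $j$ to $j: V[G] \to M[G][H]$. Elementarity applied to the hypothesis on $T$ makes $j''T$ special in $M[G][H]$, hence special in $V[G][H]$.
\item By (\ref{pr}), $V[G][H]$ is contained in an extension of $V[G]$ by $\B_{[\kappa,j(\kappa))} \x \T$ with $\T$ $\sigma$-closed. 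Forcing first with $\T$ keeps $T$ non-special by Todorcevi\'c's observation for $\sigma$-closed forcings, and keeps $T$ without cofinal branches. Then forcing with the random algebra keeps $T$ non-special by Theorem \ref{th:random1}, which applies because the random algebra is still $\SFCC$ by (\ref{abs}). This contradicts step 4.
\end{enumerate}

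The main obstacle is the claim inside step 5 that $\sigma$-closed forcing adds no cofinal branch to $T$. One cannot get this for free: $T$ need not be small, and a cofinal branch could appear. I would handle it as in Zhang: a cofinal branch added through $T$ gives, via the usual argument using $2^\omega$ together with the $\sigma$-closedness of $\T$ over $V[G]$, a contradiction with $T$ having no cofinal branches in $V[G]$. More safely, I would apply Todorcevi\'c's lemma in its form ``$\sigma$-closed forcing does not specialize trees without cofinal branches,'' which already covers the branch issue. A second subtlety is the order of the factors in the product $\B_{[\kappa,j(\kappa))} \x \T$. It is harmless because the statement ``$T$ is special'' is upward absolute, so it suffices to show non-speciality in the full product extension, taking $\T$ first and then the random algebra.

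\textbf{Part (ii).} I would use (\ref{dist}): $V[G] = V[g][h]$, where $g$ is $\B_\kappa$-generic and $h$ is generic for the $\sigma$-distributive forcing $\dot{Q}_\kappa$, which adds no reals.
\begin{itemize}
\item Every real of $V[G]$ lies in $V[g]$, and random forcing is $\omega^\omega$-bounding. Hence the ground-model reals, which number $\omega_1$ in $V[G]$, are dominating, so $\mathfrak{d} = \omega_1$.
\item Each real lies in $V[g \cap \B_\alpha]$ for some $\alpha < \kappa$. Null sets are coded by reals, and any $\omega_1$ null sets are coded in some $V[g\cap\B_\alpha]$. A random real over that model, coming from the quotient $\B_{[\alpha,\kappa)}$ of Fact \ref{f:iter}, avoids all of them. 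Thus $\covN = \omega_2 = 2^\omega$.
\item The ground-model reals $2^\omega \cap V$ are non-null in $V[g]$, a standard property of random forcing. They remain non-null in $V[G]$ because no reals are added, and they have size $\omega_1$ there. Hence $\nonN = \omega_1$.
\end{itemize}
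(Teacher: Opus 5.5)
Your proposal is correct and follows the paper's proof: the same strong-compactness embedding lifted through $j(\M^R_\kappa)$ via Lemma~\ref{pi} without a master condition; the same projection from $\B^{M[G]}_{[\kappa,j(\kappa))}\times\T$, using Todorcevi\'c's lemma for the $\sigma$-closed factor and Theorem~\ref{th:random1} with (\ref{abs}) for the random factor; and the same use of (\ref{dist}) for $\mathfrak{d}$, $\covN$ and $\nonN$. The differences are only that you argue contrapositively and fill in $2^\omega=\omega_2$. Your covering set $S$ is what makes steps 3--4 rigorous, since $j''T$ need not belong to $M[G][H]$; you should apply elementarity to the downward closure of $S\cap j(T)$.

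The one thing to correct is your ``main obstacle,'' which is not an obstacle. Your first fix would also not work, because Silver-type branch arguments need levels of size $<2^\omega$, while $T$ may have arbitrarily large levels. Nothing is lost if $\T$ does add a cofinal branch. A tree of height $\omega_1$ with a cofinal branch is non-special in every $\omega_1$-preserving extension. Moreover, the proof of Theorem~\ref{th:random1} never uses the no-branch hypothesis: Lemma~\ref{lm:finite} only needs $F$ to be finite-to-one on chains.
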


\begin{proof}
Regarding (\ref{r1}). We use the usual elementary embedding argument (see \cite[Section 2.1]{Zhang} for more details). Suppose $\dot{T}$ is an $\M^R_\kappa$-name for a non-special tree of height $\omega_1$ without cofinal branches. We can view $\dot{T}$ as a name for some partial order on some cardinal $\theta$. Fix an elementary embedding $j: V \to M$ witnessing that $\kappa$ is strongly compact for some $\lambda > \theta$. Suppose $$j^*: V[G] \to M[G][H_0 \times H_1]$$ is a lifted embedding in $V[G][H_0 \times H_1]$, where $G$ is $\M^R_\kappa$-generic over $V$, $H_0$ is $\B_{[\kappa,j(\kappa))}^{M[G]}$-generic over $V[G]$, and $H_1$ is $\T_\kappa^G$-generic over $V[G][H_0]$ for the term forcing which is $\sigma$-closed in $M[G]$ such that there is a projection $\pi_\kappa$ in $M[G]$:

\begin{equation}\label{projection}
\pi_\kappa:  \B_{[\kappa,j(\kappa))}^{M[G]} \times \T_\kappa \to j(\M^R_\kappa)/G.
\end{equation}

The existence of such a projection follows by Lemma \ref{pi} (and the fact that $\kappa$ is inaccessible in $M[G]$ and hence $\kappa$ is not in the domain of the functions $q$'s on the second coordinate of conditions in $j(\M^R_\kappa)$).

Let $T = \dot{T}^G$. By the properties of $j$ and $j^*$, $T$ is a subtree of $j^*(T)$ of size $<j(\kappa)$ in $M[G][H_0 \times H_1]$. We aim to show that $T$ is a non-special subtree of $j^*(T)$. It will then follow by the elementarity of $j^*$  that a non-special subtree of $T$ of size $<\kappa$ must exist in $V[G]$, thus proving $\RC$ (note that $\kappa = \omega_2$ in $V[G]$). 

It suffices to show that  $T$ is not specialized by $H_0 \x H_1$. The term forcing $\T_\kappa$ is $\sigma$-closed and hence does not specialize trees by Todorcevi{\'c} \cite{T:Rado}. In $M[G][H_1]$, the forcing $\B_{[\kappa,j(\kappa))}^{M[G]}$ is still $\SFCC$ by (\ref{abs}), and by Theorem \ref{th:random1}, does not specialize trees either.

Regarding (\ref{r3}). Let us view $\M^R_\kappa$ as $\B_\kappa * \dot{Q}_\kappa$ by (\ref{dist}). It is known that $\B_\kappa$ forces that the ground model reals remain a dominating family. Since $\dot{Q}_\kappa$ does not add new reals and collapses $(2^\omega)^V$ to $\omega_1$, the ground model reals are a dominating family of size $\omega_1$  in $V[\M^R_\kappa]$. Similarly, the ground model reals witness $\nonN = \omega_1$ in $V[\M^R_\kappa]$. Lastly, it is well-known that $\covN = \kappa$ in $V[\B_\kappa]$; it is easy to see that $\dot{Q}_\kappa$ cannot add a family $F$ of Borel null sets of size $<\kappa$ which completely cover all reals in $V[\B_\kappa]$: In some detail,  if there were some such $F \in V[\B_\kappa * \dot{Q}_\kappa]$, it would be a subset of $V[\B_\alpha]$ for some $\alpha < \kappa$ since  $\B_\kappa$ is ccc and all Borel null sets are coded by reals (via their Borel codes), and $\dot{Q}_\kappa$ does not add new reals. But any generic real added after stage $\alpha$  avoids every null set in $V[\B_\alpha]$, and in particular in $F$.\footnote{See \cite[Lemma 4.5]{HS:ureg} for more examples of invariants of $\omega^\omega$ which are not changed by $\sigma$-distributive forcing notions.}
\end{proof}

Note that a standard argument also shows that the tree property at $\omega_2$ holds in $V[\M^R_\kappa]$: Suppose $T$ is an $\omega_2$-tree in $M[G]$; the forcing $\T_\kappa$ does not add cofinal branches because it is $\sigma$-closed and $2^\omega = \omega_2$. In $M[G][H_1]$, $\B_{[\kappa,j(\kappa))}^{M[G]}$ is still $\SFCC$, and hence Knaster, and does not add cofinal branches to $T$ either. In a similar way, other tree-type compactness principles can be checked to hold in $V[\M^R_\kappa]$ as well.

\section{Open questions}

Recall the following well-known independent problems in mathematics.

\textbf{Suslin Hypothesis, $\SH$}  asserts that every dense linear order without end points which is complete and satisfies the ccc condition must be separable (and hence isomorphic to the reals). It is equivalent to the non-existence of an $\omega_1$-Suslin tree. $\SH$ follows from $\MA$ by Solovay and Tennenbaum \cite{ST:Suslin} and is falsified by $\Diamond_{\omega_1}$ (see Jensen \cite{JENfine}).

\textbf{Whitehead's Conjecture, $\WP$.} We say that an abelian group $A$ is Whitehead if every surjective homomorphism $f$ from any abelian group $B$ onto $A$ with kernel $\Z$ splits, i.e.\ there exists some homomorphism $f^*: A \to B$ such that $f \circ f^*$ is the identity on $A$. It is known that every free group is Whitehead. Whitehead asked whether the converse holds as well. Stein \cite{Stein} proved that all countable Whitehead groups are free. We write $\mathsf{WC}(\kappa)$ to assert that there exists a non-free Whitehead group of size $\kappa$ (a counterexample to all Whitehead groups being free). The question turned out to be independent from $\ZFC$. By Shelah \cite{Shelah:abelian}, $\MA$ implies $\mathsf{WC}(\kappa)$ for every regular uncountable $\kappa$ (see Eklof \cite[Section 8]{Eklof:W}), while $\Diamond_{\omega_1}(S)$ for every stationary $S$ implies $\neg \WP$ (in $V = L$, $\neg \mathsf{WC}(\kappa)$ for all regular uncountable $\kappa$). See  the Eklof's article \cite{Eklof:W} for a survey of Shelah's construction.

\textbf{Baumgartner's Axiom, $\BA$.} A set $A \sub \R$ is called $\dense$ if it has no least and greatest elements and for all $a < b$ in $A$, $A \cap (a,b)$ has size $\omega_1$. $\BA$ is the statement that all $\dense$ sets are order-isomorphic, thus extending Cantor's theorem on the categoricity of the rationals (as a linear order). $\CH$ implies the failure of $\BA$ while $\PFA$ proves $\BA$ by Baumgartner \cite{Baum:PFA} (however, the consistency strength of $\BA$ is just that of $\ZFC$ using a ccc forcing notion \cite{MR317934}).

In \cite[Section 6.3]{RH:comp} we observed that $\neg \SH$, $\neg \WP$ and $\neg \BA$ hold in standard Mitchell models yielding $\RC + 2^\omega = \omega_2$. It is easy to  observe that $\M^R_\kappa$ from Theorem \ref{th:random2} forces $\neg \SH$ and $\neg \BA$. 

\begin{lemma}\label{th:random3}
With the assumptions of Theorem \ref{th:random2}, $\M^R_\kappa$ forces  $\neg \BA$ and $\neg \SH$.\end{lemma}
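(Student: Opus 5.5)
The plan is to work in an initial stage of the iteration where $\CH$ still holds, build there the relevant combinatorial object (a Suslin tree for $\neg\SH$, a rigid-type $\omega_1$-dense set of reals for $\neg\BA$), and then show that the rest of $\M^R_\kappa$ cannot destroy it. For the preservation step I would use Lemma \ref{pi} to get a projection onto the tail from a product of a random algebra and a $\sigma$-closed term forcing.

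\textbf{Factoring the forcing.} Assume $\mathsf{GCH}$ in $V$. Following Remark \ref{rm:MR} and the proof of Lemma \ref{pi}, write $\M^R_\kappa$ as $\B_\omega * \dot{\M}^R_{[\omega,\kappa)}$, where the first factor adds a single random real. Let $W = V[\B_\omega]$. Then $\CH$ holds in $W$, and by the argument of Lemma \ref{pi} (with (\ref{dist}) and (\ref{cl}) applied in $W$) there is in $W$ a projection onto the tail from $\B^{W}_{[\omega,\kappa)} \x \T$, with $\T$ $\sigma$-closed in $W$. So it suffices to show that both objects survive forcing with $\T$ followed by a $\sigma$-finite-cc algebra; the latter is legitimate by (\ref{abs}).

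\textbf{$\neg\SH$.} By Todorcevi{\'c}'s theorem that adding a random real adds a Suslin tree, fix a Suslin tree $S \in W$.
\bce[(1)]
\item A $\sigma$-closed forcing preserves that $S$ is Suslin. Given a name for an uncountable antichain, build a countable elementary submodel together with a descending $\omega$-sequence of conditions. This fixes the antichain below level $\delta = M\cap\omega_1$, and a node on level $\delta$ that is generic over $M$ gives the contradiction.
\item In $W[\T]$, $S$ remains Suslin even after forcing with the algebra $\B$ from the projection. Equivalently, $S \x \B$ is ccc. Suppose $\{(s_\xi,b_\xi)\}_{\xi<\omega_1}$ were an uncountable antichain. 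Thin out so that all $b_\xi$ lie in one piece $\P_k$ of the $\sigma$-finite-cc decomposition. Then colour pairs $\{\xi,\eta\}$ according to whether $b_\xi,b_\eta$ are compatible. The Dushnik--Miller relation $\omega_1\to(\omega_1,\omega)^2$ forbids an infinite set of pairwise incompatible $b$'s inside $\P_k$. So there is an uncountable set on which all $b$'s are pairwise compatible, and on it the $s_\xi$ form an uncountable antichain in $S$, a contradiction.
\ece
Hence $S$ is Suslin in the extension by $\B\x\T$, and therefore also in the intermediate extension $V[\M^R_\kappa]$, because an uncountable antichain there would persist upward.

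\textbf{$\neg\BA$.} In $W$, use $\CH$ to construct an $\omega_1$-dense set $A\subseteq\R$ that is not isomorphic to its reverse. I would take an increasing (entangled-type) set in the sense of Abraham--Rubin--Shelah: every uncountable family of pairwise disjoint $n$-tuples from $A$ realizes all patterns of orderings. Then $A$ and $-A$ are non-isomorphic $\omega_1$-dense sets. The task is to show that the property persists through $\T$ and then $\B$.
\bce[(1)]
\item Through $\T$, use the same countable-model argument as for $S$. No new reals are added, and a name for an uncountable family of tuples can be captured at a countable level.
\item Through $\B$, take a name $\dot F$ for an uncountable family of disjoint tuples realizing a forbidden pattern. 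Choose conditions $b_\xi$ deciding the $\xi$-th tuple $a_\xi$, and pigeonhole the $b_\xi$ into a single $\P_k$. By Dushnik--Miller, get an uncountable set with pairwise compatible $b$'s, and apply the increasing property in the ground model to the tuples $a_\xi$. To get the required pattern on one pair and not merely on some pair among compatible conditions, I would apply the property to concatenated $(n_k+1)$-blocks of tuples, so that among $n_k+1$ conditions in $\P_k$ some two are compatible.
\ece
I expect this step to be the main obstacle. The increasing-set property must be stated in a form that can be fed blockwise (patterns for $m$-tuples of tuples), so that the finite-antichain bound $n_k$ of Definition \ref{def:sfcc} converts a pattern realized "for some pair" into one forced by a single common extension.
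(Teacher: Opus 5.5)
\textbf{The gap is the source of the Suslin tree in the $\neg\SH$ part.} You have no theorem to quote saying that adding a random real adds a Suslin tree. The result you are recalling is Shelah's theorem for a \emph{Cohen} real, and no such result is available for random reals. So $S\in W$ is simply not produced. Having $\CH$ in $W$ does not rescue this, since $\CH$ is consistent with $\SH$.

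Your preservation argument, by contrast, is sound, and it is essentially the ``product analysis'' the paper has in mind. A $\sigma$-closed forcing preserves Suslin trees; the quickest proof is Easton's lemma, deciding names along an $\omega_1$-descending chain. Your Dushnik--Miller argument correctly shows that $\B\x S$ is ccc, because $\B$ is $\sigma$-finite-cc and hence Knaster.

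The fix is to take the tree from the Mitchell part of the forcing, as the paper does. For a successor cardinal $\alpha<\kappa$, factor $\M^R_\kappa$ as $\M^R_\alpha * \dot{\Add}(\omega_1,1) * \dot{\M}^R_{[\alpha,\kappa)}$. The $\Add(\omega_1,1)$ factor adds a $\diamondsuit$-sequence and hence a Suslin tree $S$. Your argument, using a projection from random $\x$ $\sigma$-closed onto the tail, then shows that $S$ survives.

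\textbf{For $\neg\BA$, your entangled-set route can be made to work.} The step you flag as the main obstacle is not one. Once Dushnik--Miller gives an uncountable set of pairwise compatible $b_\xi$, two things follow:
\begin{itemize}
\item the decided tuples are automatically pairwise disjoint, since a common extension forces both values and their disjointness;
\item \emph{every} pair realizing the pattern comes with compatible conditions.
\end{itemize}
So no $(n_k+1)$-blocks are needed.

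However, the route needs $\CH$ in $W$. That is an extra hypothesis ($\mathsf{GCH}$ in $V$) which is not among the assumptions of Theorem~\ref{th:random2}, and without it you have no entangled set to start from.

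The paper's argument is one line and needs no extra hypothesis. Theorem~\ref{th:random2}(ii) already gives $\mf{d}=\omega_1$ in $V[\M^R_\kappa]$. By Todorcevi{\'c}, $\BA$ implies $\mf{b}>\omega_1$, so $\mf{b}\le\mf{d}=\omega_1$ refutes $\BA$.
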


\begin{proof}
By Todorcevi{\'c} \cite{T:partition}, $\BA$ implies $\mathfrak{b} > \omega_1$; since $\mathfrak{b} \le \mathfrak{d}$, having $\mathfrak{d} = \omega_1$ in $V[\M^R_\kappa]$ implies $\neg \BA$.

Regarding the existence of an $\omega_1$-Suslin tree, suppose $\alpha < \kappa$ is an infinite successor cardinal. Then by the definition of $\M^R_\kappa$ in Definition \ref{def:MR} and Remark \ref{rm:MR}, $\M^R_\kappa$ can be written as $$\M^R_\alpha * \dot{\Add}(\omega_1,1) * \dot{\M}^R_{[\alpha,\kappa)}.$$ By standard arguments, $\dot{\Add}(\omega_1,1)$ adds a diamond sequence, and hence an $\omega_1$-Suslin tree $S$. Using a product analysis of $\dot{\M}^R_{[\alpha,\kappa)}$ as in Theorem \ref{th:random2}(i), it easy to show that $S$ is preserved as a Suslin tree by $\dot{M}^R_{[\alpha,\kappa)}$.
\end{proof}

The argument for $\neg \WP$ in \cite{RH:comp} used a fact implicitly appearing in \cite{berg} that $\Add(\omega,\kappa)$ forces $\neg \WP$ to show that $\neg \WP$ holds in $V[\M_\kappa]$, where $\M_\kappa$ is a standard Mitchell forcing. It is natural to ask whether $\B_\kappa$ forces $\neg \WP$, which would yield that $\neg \WP$ holds in $V[\M^R_\kappa]$ (an adapted argument from \cite{RH:comp}).

\begin{question}\label{q}
Suppose $\kappa \ge \omega_1$. Does $\B_\kappa$ force $\neg \WP$?
\end{question}

More generally, the following was left open in \cite{RH:comp}, and remains open still:

\begin{question}\label{q1}
Are $\SH, \WP, \BA$ consistent with $\RC + 2^\omega = \omega_2$?
\end{question}

$\RC$ can be formulated for higher trees as well. Let us write $\RC(\omega_2)$ for $\RC$. Suppose $\kappa = \kappa^{<\kappa}$ is uncountable; then $\RC(\kappa^{++})$ stands for a statement that for every tree $T$ of height $\kappa^+$ without cofinal branches, if all its subtrees of $T$ of size $\le \kappa^+$ are special, so is $T$. It is known that the standard Mitchell forcing with Cohen forcing adding new subsets of $\kappa$ can be used to force $2^\kappa = \kappa^{++}$ with $\RC(\kappa^{++})$. In this model $\mf{d}_\kappa = \kappa^{++}$.  One may ask whether $\RC(\kappa^{++})$ is consistent with $\mf{d}_\kappa = \kappa^+$ (see for instance \cite{F:u} for more details on generalized cardinal invariants).

\begin{question}\label{q2}
Suppose $\kappa$ is an uncountable regular cardinal with $\kappa^{<\kappa} = \kappa$ and let us consider the generalized cardinal invariants of the space $\kappa^\kappa$. Is $\RC(\kappa^{++}) + 2^\kappa = \kappa^{++}$ consistent with $\mf{d}_\kappa = \kappa^+$?
\end{question}

To answer this question, one strategy would be to look for generalizations of random forcing to higher cardinals (see for instance Shelah \cite{Shelah:random1, Shelah:random2}) and adapt the argument in Theorem \ref{th:random2}. One could also try to prove that other forcings that preserve dominating families can be used to obtain $\RC$, such as the Sacks iteration (Zhang \cite{Zhang} showed that the countable support iteration of Sacks forcing at $\omega$  forces $\RC^B$ if iterated up to a strongly compact cardinal; however, it is open whether it forces the full version $\RC$).

\bibliographystyle{amsplain}
\bibliography{RC}

@string{IJM = "Israel J. Math."}

@string{APAL = "Ann. Pure Appl. Logic"}

@string{AML = "Annals of Mathematical Logic"}

@string{S = "Synthese"}

@string{BSL = "Bull. Symb. Log."}

@string{JLMS = "J. London Math. Soc."}

@string{AnM = "Ann. Math."}

@article {Zhang,
    AUTHOR = {Zhang, Jing},
     TITLE = {Rado's conjecture and its {B}aire version},
   JOURNAL = {J. Math. Log.},
  FJOURNAL = {Journal of Mathematical Logic},
    VOLUME = {20},
      YEAR = {2020},
    NUMBER = {1},
     PAGES = {1950015, 35},
      ISSN = {0219-0613,1793-6691},
   MRCLASS = {03E05 (03E35 03E55 03E65)},
  MRNUMBER = {4094551},
MRREVIEWER = {Luis\ Miguel\ Villegas Silva},
       DOI = {10.1142/S0219061319500156},
       URL = {https://doi.org/10.1142/S0219061319500156},
}

@article{Tdich,
author = "Todor{\v c}evi{\'c}, Stevo",
title = "Combinatorial Dichotomies in set theory",
journal = BSL,
volume = 17,
number = 1,
year = "2011",
pages = "1--72",
}

@book{T:partition,
author = "Todor{\v c}evi{\'c}, Stevo",
title = "Partition problems in topology",
series = "Contemporary Mathematics, 84",
publisher = "{American Mathematical Society, Providence, RI}",
year = 1989,
}

@article{T:Rado,
title = "{On a conjecture of R. Rado}",
author = "Todor{\v c}evi{\'c}, Stevo",
journal = JLMS,
volume = 27,
number = 1,
year = 1983,
pages = "1-8",
}

@misc{RH:comp,
title={Compactness for small cardinals in mathematics: principles, consequences, and limitations}, 
year = {2025},
author={Honzik, Radek},
note={\url{https://arxiv.org/abs/2510.27618}},
}

@article{ABR:tree,
author = "Abraham, Uri",
title = "Aronszajn trees on $\aleph_2$ and $\aleph_3$",
 JOURNAL = {Ann. Pure Appl. Logic},
volume = 24,
number = 3,
pages = "213-230",
year = 1983,
}

@article {TPW:more,
    AUTHOR = {Torres-P\'erez, V\'ictor and Wu, Liuzhen},
     TITLE = {Strong {C}hang's conjecture, semi-stationary reflection, the
              strong tree property and two-cardinal square principles},
   JOURNAL = {Fund. Math.},
  FJOURNAL = {Fundamenta Mathematicae},
    VOLUME = {236},
      YEAR = {2017},
    NUMBER = {3},
     PAGES = {247--262},
      ISSN = {0016-2736,1730-6329},
   MRCLASS = {03E05 (03E30 03E55)},
  MRNUMBER = {3600760},
MRREVIEWER = {Mohammad\ Golshani},
       DOI = {10.4064/fm257-5-2016},
       URL = {https://doi.org/10.4064/fm257-5-2016},
}

@article {TPW:cc,
    AUTHOR = {Torres-P\'erez, V\'ictor and Wu, Liuzhen},
     TITLE = {Strong {C}hang's conjecture and the tree property at
              {$\omega_2$}},
   JOURNAL = {Topology Appl.},
  FJOURNAL = {Topology and its Applications},
    VOLUME = {196},
      YEAR = {2015},
     PAGES = {999--1004},
      ISSN = {0166-8641,1879-3207},
   MRCLASS = {03E05 (03E65)},
  MRNUMBER = {3431031},
MRREVIEWER = {Xianghui\ Shi},
       DOI = {10.1016/j.topol.2015.05.061},
       URL = {https://doi.org/10.1016/j.topol.2015.05.061},
}

@article{ST:Suslin,
author = "Solovay, R. M. and Tennenbaum, S.",
title = {Iterated {Cohen Extensions and Souslin's Problem}},
year = 1971,
journal = AnM,
volume =  94,
number = 2,
pages = "201--245",
}

@article{JENfine,
author = "R. Bj{\"{o}}rn Jensen",
title = "The Fine Structure of the Constructible Hierarchy",
journal = "Annals of Mathematical Logic",
volume = 4,
number = 3,
year = 1972,
pages = "229--308",
}

@article{Stein,
author = "Stein, K.",
title = "Analytische {F}unktionen mehrerer komplexer {V}er{\"{a}}nderlichen zu vorgegebenen {P}eriodizit{\"{a}}tsmoduln und das zweite {C}ousinsche {P}roblem",
journal = "Math.\ Ann.",
volume = 123,
year = 1951,
pages = "201--222",
}

@article{Shelah:abelian,
author = "Shelah, Saharon",
title = "Infinite abelian groups, {W}hitehead problem and some constructions",
journal = IJM,
volume = 18,
pages = "243--256",
year = 1974,
doi = "https://doi.org/10.1007/BF02757281",
}

@article{Eklof:W,
author = "Eklof, Paul C.",
title = "{W}hitehead's problem is undecidable",
journal = "The American Mathematical Monthly",
volume = 83,
number = 10,
pages = "775--788",
year = 1976,
}

@incollection{Baum:PFA,
editor = "Kunen, K. and Vaughan, J. E.",
booktitle = "Handbook of set theoretic topology",
author = "Baumgartner, James E.",
title = "Aplication of the proper forcing axiom",
year = 1984,
publisher = "North-Holland Publishing Co.",
pages = "913--959",
}

@article {MR317934,
    AUTHOR = {Baumgartner, James E.},
     TITLE = {All {$\aleph \sb{1}$}-dense sets of reals can be isomorphic},
   JOURNAL = {Fund. Math.},
  FJOURNAL = {Polska Akademia Nauk. Fundamenta Mathematicae},
    VOLUME = {79},
      YEAR = {1973},
    NUMBER = {2},
     PAGES = {101--106},
      ISSN = {0016-2736,1730-6329},
   MRCLASS = {02K05 (06A05)},
  MRNUMBER = {317934},
MRREVIEWER = {John\ Hickman},
       DOI = {10.4064/fm-79-2-101-106},
       URL = {https://doi.org/10.4064/fm-79-2-101-106},
}

@article {HS:ureg,
    AUTHOR = {Honzik, Radek and Stejskalov{\'a}, {\v S}{\'a}rka},
     TITLE = {Generalized cardinal invariants for an inaccessible {$\kappa$}
              with compactness at {$\kappa^{++}$}},
   JOURNAL = {Arch. Math. Logic},
  FJOURNAL = {Archive for Mathematical Logic},
    VOLUME = {64},
      YEAR = {2025},
    NUMBER = {7-8},
     PAGES = {1077--1102},
      ISSN = {0933-5846,1432-0665},
   MRCLASS = {03E55 (03E05 03E17 03E35)},
  MRNUMBER = {4972247},
       DOI = {10.1007/s00153-025-00977-2},
       URL = {https://doi.org/10.1007/s00153-025-00977-2},
}

@book{fremlin5,
  title={Measure Theory: {V}olume 5, {S}et-theoretic measure theory, part {II}},
  author={Fremlin, David H.},
  year={2004},
  publisher={Torres Fremlin},
  address={Colchester, UK},
}

@article{maharam1942homogeneous,
  title={On homogeneous measure algebras},
  author={Maharam, Dorothy},
  journal={Proceedings of the National Academy of Sciences of the United States of America},
  volume={28},
  number={3},
  pages={108--111},
  year={1942},
  }

@book{fremlin2004measure,
  title={Measure Theory: Volume 3: Measure Algebras},
  author={Fremlin, David H.},
  year={2004},
  publisher={Torres Fremlin},
  address={Colchester, UK},
}

@book{fremlin2004measure1,
  title={Measure Theory: Volume 1: The irreducible minimum},
  author={Fremlin, David H.},
  year={2004},
  publisher={Torres Fremlin},
  address={Colchester, UK},
}

@book{fremlin2004measure2,
  title={Measure Theory: Volume 2: Broad Foundations},
  author={Fremlin, David H.},
  year={2004},
  publisher={Torres Fremlin},
  address={Colchester, UK},
}

@book{JECHbook,
author = "Jech, Tom{\'a}{\v s}",
title = "Set Theory",
subtitle = "The Third Millennium Edition, revised and expanded",
publisher = "Springer",
year = 2003,
series = "Springer Monographs in Mathematics",
address = "Berlin",
}

@unpublished{berg,
      title={Whitehead's problem and condensed mathematics}, 
      author={Jeffrey Bergfalk and Chris Lambie-Hanson and Jan Šaroch},
      year={2024},
      note={https://arxiv.org/abs/2312.09122}, 
}

@article{M:tree,
author = "Mitchell, William J.",
title = "{A}ronszajn trees and the independence of the transfer property",
journal = AML,
volume = 5,
number = 1,
pages = "21--46",
year = "1972/1973",
}

@article{F:u,
author = "Brooke-Taylor, Andrew and Fischer, Vera and Friedman, Sy-David and Montoya, Diana C.",
journal = APAL,
title = "Cardinal characteristics at $\kappa$ in a small $\mathfrak{u}(\kappa)$ model",
year = 2017,
volume = 168,
number = 1,
pages = "37--49",
}

@incollection{TODtophandbook,
editor = "Kunen, K. and Vaughan, J. E.",
booktitle = "Handbook of Set-Theoretic Topology",
author = "Todor{\v c}evi{\'c}, S.",
title = "Trees and linearly Ordered Sets",
year = 1984,
publisher = "Elsevier Science Publishers B.V.",
}

@incollection{Blasshandbook,
editor = "Foreman, Matthew and Kanamori, Akihiro",
volume = 2,
booktitle = "Handbook of Set Theory",
author = "Blass, A.",
title = "Combinatorial cardinal characteristics of the continuum",
year = 2010,
publisher = "Springer",
}

@book{BJ:settheory,
author = "Bartoszy\'nsky, T. and Judah, H.",
title = "Set theory: On the structure of the real line",
publisher = "AK Peters",
year = 1995,
}

@article {Shelah:random1,
    AUTHOR = {Shelah, Saharon},
     TITLE = {A parallel to the null ideal for inaccessible {$\lambda$}:
              {P}art {I}},
   JOURNAL = {Arch. Math. Logic},
  FJOURNAL = {Archive for Mathematical Logic},
    VOLUME = {56},
      YEAR = {2017},
    NUMBER = {3-4},
     PAGES = {319--383},
      ISSN = {0933-5846,1432-0665},
   MRCLASS = {03E35 (03E55)},
  MRNUMBER = {3633799},
MRREVIEWER = {Chris\ Lambie-Hanson},
       DOI = {10.1007/s00153-017-0524-0},
       URL = {https://doi.org/10.1007/s00153-017-0524-0},
}

@article {Shelah:random2,
    AUTHOR = {Cohen, Shani and Shelah, Saharon},
     TITLE = {Generalizing random real forcing for inaccessible cardinals},
   JOURNAL = {Israel J. Math.},
  FJOURNAL = {Israel Journal of Mathematics},
    VOLUME = {234},
      YEAR = {2019},
    NUMBER = {2},
     PAGES = {547--580},
      ISSN = {0021-2172,1565-8511},
   MRCLASS = {03E35 (03E55)},
  MRNUMBER = {4040837},
MRREVIEWER = {Miha\ E.\ Habi\v c},
       DOI = {10.1007/s11856-019-1925-z},
       URL = {https://doi.org/10.1007/s11856-019-1925-z},
}

 \end{document}